\newcommand*{\htarrow}{\lhook\joinrel\twoheadrightarrow}
\begin{document}

\newcommand{\REMARK}[1]{\marginpar{\tiny #1}}
\newtheorem{thm}{Theorem}[subsection]
\newtheorem{lemma}[thm]{Lemma}
\newtheorem{corol}[thm]{Corollary}
\newtheorem*{corol*}{Corollary}
\newtheorem{prop}[thm]{Proposition}
\newtheorem{defin}[thm]{Definition}
\newtheorem{Remark}[thm]{Remark}
\numberwithin{equation}{subsection}

\newtheorem{notas}[thm]{Notations}
\newtheorem{nota}[thm]{Notation}
\newtheorem{defis}[thm]{Definitions}
\newtheorem{hyp}[smfthm]{Hypothesis}
\newtheorem*{thm*}{Theorem}
\newtheorem*{prop*}{Proposition}
\newtheorem*{conj*}{Conjecture}

\def\Tm{{\mathbb T}}
\def\Um{{\mathbb U}}
\def\Am{{\mathbb A}}
\def\Fm{{\mathbb F}}
\def\Mm{{\mathbb M}}
\def\Nm{{\mathbb N}}
\def\Pm{{\mathbb P}}
\def\Qm{{\mathbb Q}}
\def\Zm{{\mathbb Z}}
\def\Dm{{\mathbb D}}
\def\Cm{{\mathbb C}}
\def\Rm{{\mathbb R}}
\def\Gm{{\mathbb G}}
\def\Lm{{\mathbb L}}
\def\Km{{\mathbb K}}
\def\Om{{\mathbb O}}
\def\Em{{\mathbb E}}
\def\Xm{{\mathbb X}}

\def\BC{{\mathcal B}}
\def\QC{{\mathcal Q}}
\def\TC{{\mathcal T}}
\def\ZC{{\mathcal Z}}
\def\AC{{\mathcal A}}
\def\CC{{\mathcal C}}
\def\DC{{\mathcal D}}
\def\EC{{\mathcal E}}
\def\FC{{\mathcal F}}
\def\GC{{\mathcal G}}
\def\HC{{\mathcal H}}
\def\IC{{\mathcal I}}
\def\JC{{\mathcal J}}
\def\KC{{\mathcal K}}
\def\LC{{\mathcal L}}
\def\MC{{\mathcal M}}
\def\NC{{\mathcal N}}
\def\OC{{\mathcal O}}
\def\PC{{\mathcal P}}
\def\UC{{\mathcal U}}
\def\VC{{\mathcal V}}
\def\XC{{\mathcal X}}
\def\SC{{\mathcal S}}
\def\RC{{\mathcal R}}

\def\BF{{\mathfrak B}}
\def\AF{{\mathfrak A}}
\def\GF{{\mathfrak G}}
\def\EF{{\mathfrak E}}
\def\CF{{\mathfrak C}}
\def\DF{{\mathfrak D}}
\def\JF{{\mathfrak J}}
\def\LF{{\mathfrak L}}
\def\MF{{\mathfrak M}}
\def\NF{{\mathfrak N}}
\def\XF{{\mathfrak X}}
\def\UF{{\mathfrak U}}
\def\KF{{\mathfrak K}}
\def\FF{{\mathfrak F}}

\def \longmapright#1{\smash{\mathop{\longrightarrow}\limits^{#1}}}
\def \mapright#1{\smash{\mathop{\rightarrow}\limits^{#1}}}
\def \lexp#1#2{\kern \scriptspace \vphantom{#2}^{#1}\kern-\scriptspace#2}
\def \linf#1#2{\kern \scriptspace \vphantom{#2}_{#1}\kern-\scriptspace#2}
\def \linexp#1#2#3 {\kern \scriptspace{#3}_{#1}^{#2} \kern-\scriptspace #3}

\def \lie {{\mathop{\mathrm{Lie}}\nolimits}}
\def \Ext{\mathop{\mathrm{Ext}}\nolimits}
\def \ad{\mathop{\mathrm{ad}}\nolimits}
\def \sh{\mathop{\mathrm{Sh}}\nolimits}
\def \irr{\mathop{\mathrm{Irr}}\nolimits}
\def \FH{\mathop{\mathrm{FH}}\nolimits}
\def \FPH{\mathop{\mathrm{FPH}}\nolimits}
\def \coh{\mathop{\mathrm{Coh}}\nolimits}
\def \res{\mathop{\mathrm{Res}}\nolimits}
\def \op{\mathop{\mathrm{op}}\nolimits}
\def \rec {\mathop{\mathrm{rec}}\nolimits}
\def \art{\mathop{\mathrm{Art}}\nolimits}
\def \vol {\mathop{\mathrm{vol}}\nolimits}
\def \cusp {\mathop{\mathrm{Cusp}}\nolimits}
\def \scusp {\mathop{\mathrm{Scusp}}\nolimits}
\def \Iw {\mathop{\mathrm{Iw}}\nolimits}
\def \JL {\mathop{\mathrm{JL}}\nolimits}
\def \speh {\mathop{\mathrm{Speh}}\nolimits}
\def \isom {\mathop{\mathrm{Isom}}\nolimits}
\def \Vect {\mathop{\mathrm{Vect}}\nolimits}
\def \groth {\mathop{\mathrm{Groth}}\nolimits}
\def \hom {\mathop{\mathrm{Hom}}\nolimits}
\def \deg {\mathop{\mathrm{deg}}\nolimits}
\def \val {\mathop{\mathrm{val}}\nolimits}
\def \det {\mathop{\mathrm{det}}\nolimits}
\def \rep {\mathop{\mathrm{Rep}}\nolimits}
\def \spec {\mathop{\mathrm{Spec}}\nolimits}
\def \fr {\mathop{\mathrm{Fr}}\nolimits}
\def \frob {\mathop{\mathrm{Frob}}\nolimits}
\def \ker {\mathop{\mathrm{Ker}}\nolimits}
\def \im {\mathop{\mathrm{Im}}\nolimits}
\def \Red {\mathop{\mathrm{Red}}\nolimits}
\def \red {\mathop{\mathrm{red}}\nolimits}
\def \aut {\mathop{\mathrm{Aut}}\nolimits}
\def \diag {\mathop{\mathrm{diag}}\nolimits}
\def \spf {\mathop{\mathrm{Spf}}\nolimits}
\def \Def {\mathop{\mathrm{Def}}\nolimits}
\def \nrd {\mathop{\mathrm{nrd}}\nolimits}
\def \supp {\mathop{\mathrm{Supp}}\nolimits}
\def \Id {{\mathop{\mathrm{Id}}\nolimits}}
\def \In {{\mathop{\mathrm{In}}\nolimits}}
\def \Ind{\mathop{\mathrm{Ind}}\nolimits}
\def \ind {\mathop{\mathrm{ind}}\nolimits}
\def \bad {\mathop{\mathrm{Bad}}\nolimits}
\def \top {\mathop{\mathrm{Top}}\nolimits}
\def \ker {\mathop{\mathrm{Ker}}\nolimits}
\def \coker {\mathop{\mathrm{Coker}}\nolimits}
\def \gal {{\mathop{\mathrm{Gal}}\nolimits}}
\def \Nr {{\mathop{\mathrm{Nr}}\nolimits}}
\def \rn {{\mathop{\mathrm{rn}}\nolimits}}
\def \tr {{\mathop{\mathrm{Tr~}}\nolimits}}
\def \Sp {{\mathop{\mathrm{Sp}}\nolimits}}
\def \st {{\mathop{\mathrm{St}}\nolimits}}
\def \sp{{\mathop{\mathrm{Sp}}\nolimits}}
\def \perv{\mathop{\mathrm{Perv}}\nolimits}
\def \tor {{\mathop{\mathrm{Tor}}\nolimits}}
\def \gr {{\mathop{\mathrm{gr}}\nolimits}}
\def \nilp {{\mathop{\mathrm{Nilp}}\nolimits}}
\def \obj {{\mathop{\mathrm{Obj}}\nolimits}}
\def \spl {{\mathop{\mathrm{Spl}}\nolimits}}
\def \unr {{\mathop{\mathrm{Unr}}\nolimits}}
\def \alg {{\mathop{\mathrm{Alg}}\nolimits}}
\def \grr {{\mathop{\mathrm{grr}}\nolimits}}
\def \cogr {{\mathop{\mathrm{cogr}}\nolimits}}
\def \coFil {{\mathop{\mathrm{coFil}}\nolimits}}

\def \rem{{\noindent\textit{Remark.~}}}
\def \rems{{\noindent\textit{Remarques:~}}}
\def \ext {{\mathop{\mathrm{Ext}}\nolimits}}
\def \End {{\mathop{\mathrm{End}}\nolimits}}

\def\semi{\mathrel{>\!\!\!\triangleleft}}
\let \DS=\displaystyle
\def\HT{{\mathop{\mathcal{HT}}\nolimits}}

\def \hi{\HC}
\newcommand*{\tarrow}{\relbar\joinrel\mid\joinrel\twoheadrightarrow}
\newcommand*{\harrow}{\lhook\joinrel\relbar\joinrel\mid\joinrel\rightarrow}
\newcommand*{\rarrow}{\relbar\joinrel\mid\joinrel\rightarrow}
\def \coim {{\mathop{\mathrm{Coim}}\nolimits}}
\def \can {{\mathop{\mathrm{can}}\nolimits}}
\def\LFF{{\mathscr L}}

\setcounter{secnumdepth}{3} \setcounter{tocdepth}{3}

\def \Fil{\mathop{\mathrm{Fil}}\nolimits}
\def \CoFil{\mathop{\mathrm{CoFil}}\nolimits}
\def \Fill{\mathop{\mathrm{Fill}}\nolimits}
\def \CoFill{\mathop{\mathrm{CoFill}}\nolimits}
\def\SF{{\mathfrak S}}
\def\PF{{\mathfrak P}}
\def \EFil{\mathop{\mathrm{EFil}}\nolimits}
\def \EFill{\mathop{\mathrm{EFill}}\nolimits}
\def \FP{\mathop{\mathrm{FP}}\nolimits}

\let \longto=\longrightarrow
\let \oo=\infty

\let \d=\delta
\let \k=\kappa

\renewcommand{\theequation}{\arabic{section}.\arabic{subsection}.\arabic{thm}}
\newcommand{\marque}{\addtocounter{thm}{1}
{\smallskip \noindent \textit{\thethm}~---~}}

\renewcommand\atop[2]{\ensuremath{\genfrac..{0pt}{1}{#1}{#2}}}

\newcommand\atopp[2]{\genfrac{}{}{0pt}{}{#1}{#2}}

\title[Galois irreducibility implies cohomology freeness]{Galois irreducibility implies 
cohomology freeness for KHT Shimura varieties}


\author{Boyer Pascal}
\email{boyer@math.univ-paris13.fr}
\address{Universit\'e Sorbonne Paris Nord \\
LAGA, CNRS, UMR 7539\\ 
F-93430, Villetaneuse (France) \\
CoLoSS: ANR-19-PRC}

\frontmatter

\begin{abstract}
Given a KHT Shimura variety with an action of its unramified Hecke algebra $\Tm$,
we proved in \cite{boyer-imj}, see also \cite{scholze-cara} for other PEL Shimura 
varieties, that its localized cohomology groups at a generic maximal ideal $\mathfrak m$ of 
$\Tm$, appear to be free.
In this work, we obtain the same result for $\mathfrak m$ such that its associated
galoisian $\overline \Fm_l$-representation $\overline{\rho_{\mathfrak m}}$ is irreducible,
under the hypothesis  that $[F(\exp(2i\pi/l):F]>d$ where $F$ is the reflex field, $d$
the dimension of the KHT Shimura variety and $l$ the residual characteristic.

\end{abstract}

\subjclass{11F70, 11F80, 11F85, 11G18, 20C08}


\keywords{Shimura varieties, torsion in the cohomology, maximal ideal of the Hecke algebra,
localized cohomology, Gxsalois representation}

\maketitle

\pagestyle{headings} \pagenumbering{arabic}

\tableofcontents
%
%

\section*{Introduction}
\renewcommand{\theequation}{\arabic{equation}}

\renewcommand{\thethm}{\Alph{thm}}

\backmatter

From Matsushima's formula and computations of $(\mathfrak G,K_\oo)$-cohomology,
we know that tempered automorphic representations contributions in the cohomology
of Shimura varieties with complex coefficients, is concentrated in middle degree.
If you consider cohomology with coefficients in a very regular local system, then
only tempered representations can contribute so that all of the cohomology is concentrated
in middle degree.

For $\overline \Zm_l$-coefficients and Shimura varieties of Kottwitz-Harris-Taylor types,
we proved in \cite{boyer-stabilization}, whatever is the weight of 
the coefficients, when the level is large enough at $l$, there are always non trivial torsion 
cohomology classes, so that the $\overline \Fm_l$-cohomology can not be concentrated in middle
degre. Thus if you want a $\overline \Fm_l$-analog of the previous $\overline \Qm_l$-statement, 
you must cut off some part of the cohomology. 

In \cite{boyer-imj} for KHT Shimura varieties, and more generally
in \cite{scholze-cara} for any PEL proper Shimura variety, we obtain such a result
under some genericness hypothesis which can be stated as follows.
Let $(\sh_K)_{K \subset G(\Am^\oo)}$ be a tower, indexed by open compact subgroups
$K$ of $G(\Am^\oo)$, of compact Shimura varieties of Kottwitz type associated to some 
similitude group $G$: we denote by $F=EF^+$ its reflex field
where $F^+$ is totally real and $E/\Qm$ is an imaginary quadratic extension.
Let then $\mathfrak m$ be a system of Hecke eigenvalues appearing in 
$H^{n_0}(\sh_K \times_F \overline F,\overline \Fm_l)$.
By the main result of \cite{scholze-torsion}, one can attach to such $\mathfrak m$,
a mod $l$ Galois representation 
$$\overline{\rho_{\mathfrak m}}: \gal(\overline F/F) \longrightarrow GL_d(\overline \Fm_l).$$
From \cite{scholze-cara} definition 1.9,
we say that $\mathfrak m$ is generic (resp. decomposed generic) at some split $p$ in $E$,  
if for all places $v$ of $F$ dividing $p$,
the set $\{ \lambda_1,\cdots,\lambda_n \}$ of eigenvalues of 
$\overline{\rho_{\mathfrak m}}(\frob_v)$  
satisfies $\lambda_i/\lambda_j \not \in \{ q_v^{\pm 1} \}$ for all $i \neq j$ 
(resp. and are pairwise distincts), where $q_v$ is the cardinal
of the residue field at $v$. Then under the hypothesis\footnote{In their new preprint, Caraiani and Scholze explained that, from an observation of Koshikawa, one can replace decomposed generic
by simply generic, in their main statement.} that there exists such $p$ with
$\mathfrak m$ generic at $p$, the integer $n_0$ above is necessary equals to
the relative dimension of $\sh_K$. In particular the 
$H^i(\sh_K \times_F \overline F,\overline \Zm_l)_{\mathfrak m}$
are all torsion free.
%
%
%

\medskip

In this work we consider the particular case of Kottwitz-Harris-Taylor Shimura varieties $\sh_K$ 
of \cite{h-t} associated to inner forms of $GL_d$. Exploiting the fact, which is particular to these Shimura
varieties, that the non supersingular Newton strata are geometrically induced, we are then able to
prove the following result which appears to be useful at least for our approach of
Ihara's lemma, cf. \cite{boyer-ihara}.

%

\smallskip

\noindent \textbf{\textit{Theorem}} --- \textit{
We suppose that $[F(\exp(2i\pi/l):F]>d$ 
Let $\mathfrak m$ be a system of Hecke eigenvalues 
such that $\overline{\rho_{\mathfrak m}}$
is irreducible, then the localized cohomology groups of $\sh_K$ with coefficients in any
$\overline \Zm_l$-local system $V_\xi$, are all free.}

Note that Koshikawa, cf. \cite{koshikawa}, starting from \cite{boyer-imj} and using techniques from group theory, proved a similar result in low dimension.

\rem Thanks to the hypothesis $[F(\exp(2i\pi/l):F]>d$ and by Cebotarev theorem,
we can pick places $v$ of $F$ such that the order $q_v$ of the residue field of $F$ at $v$,
is of order strictly greater than $d$  in $(\Zm/l\Zm)^\times$. 
This property is used at three places in the proof.
\begin{itemize}
\item For a place $v$ as above, there is no irreducible cuspidal representation
$\pi_v$ of $GL_g(F_v)$ with $g>1$ such that its modulo $l$ reduction has a supercuspidal
support made of characters, cf. the remark after \ref{nota-mvarrho}.
This simplification is completely harmless and if one wants to take care about
these cuspidal representations, it suffices to used the proposition 2.4.2 
of \cite{boyer-duke}.

\item With this hypothesis we also note that the pro-order of $GL_d(\OC_v)$ is invertible
modulo $l$ so that, concerning torsion cohomology classes, 
we can easily pass from infinite to maximal level at $v$, cf. for example the lemma \ref{lem-iIv}.

\item Finally in the last section, we are able to construct a
sequence of intervals contained in the set of eigenvalues of 
$\overline \rho_{\mathfrak m}(\frob_v)$ so that at the end we obtain a full set
$\{ \lambda q_v^n: n \in \Zm \}$ which is of order the order of $q_v$ modulo $l$ which
is trivially absurd if this order is strictly greater than the dimension $d$ of
$\overline \rho_{\mathfrak m}$.
\end{itemize}

%
%

%
%
%
%

The proof takes place in four main steps. 

(1) First we analyse the torsion in the
cohomology of Harris-Taylor perverse sheaves at some place
$v$ with infinite level at another place $w$. 
The main observation which rests on the symmetric role of both $v$ and $w$, 
cf. corollary \ref{coro-absurd}, 
is that, considering
$l$-torsion as a $\overline \Fm_l$-representation of $GL_d(F_w)$, then
we never encounter any irreducible generic sub-quotient with cuspidal
support made of characters.

(2) As one can compute the cohomology of the Shimura variety through the
spectral sequence associated to the exhaustive filtration of stratification
$\Fill^\bullet(\Psi_v)$ of the vanishing cycles perverse sheaf, where the
$E_1$ terms are given by the cohomology groups of the
Harris-Taylor perverse sheaves, we then observe that the various
lattices of $H^{d-1}_{free}(\sh_{K} \times_F \overline F,V_{\xi,\overline \Qm_l})_{\widetilde{\mathfrak m}}$ given by the integral cohomology, 
are only slightly modified from the ones given by the cohomology of the
Harris-Taylor perverses sheaves, in the sense that the $l$-torsion of the 
cokernel measuring the difference between two such lattices, as a representation
of $GL_d(F_w)$, does not have any 
irreducible generic sub-quotient with cuspidal
support made of characters, cf. proposition \ref{prop-lattice-psi}.

(3) However if the torsion sub-module of 
$H^{d-1}(\sh_{K} \times_F \overline F,V_{\xi,\overline \Zm_l})_{\mathfrak m}$
were not trivial, we prove, using the geometrical induced structure of the 
 Newton strata , that it must exists $\widetilde{\mathfrak m}$
such that the lattices of the previous point, are not isomorphic, cf. 
lemma \ref{lem-important}

(4) Finally in \S \ref{para-final}, 
for any $\widetilde{\mathfrak m} \subset \mathfrak m$, then 
$H^{d-1}(\sh_K \times_F \overline F,V_{\xi,\overline \Zm_l})_{\mathfrak m}$
induces a quotient stable lattice $\Gamma_{\widetilde{\mathfrak m}}$ of $(\Pi_{\widetilde{\mathfrak m}}^{\oo})^K \otimes
\rho_{\widetilde{\mathfrak m}}$. As
$\overline \rho_{\mathfrak m}$ is supposed to be irreducible, then this
lattice is isomorphic to a tensorial product of a stable lattice of 
$(\Pi_{\widetilde{\mathfrak m}}^{\oo})^K$ by a stable lattice of
$\rho_{\widetilde{\mathfrak m}}$. Then the idea is to start from the filtration
of the free quotient of $H^{d-1}(\sh_K \times_F \overline F,
V_{\xi,\overline \Zm_l})_{\mathfrak m}$ given
by the filtration of the nearby perverse sheaf, so that, using
diagrams as \ref{eq-extension}, we arrive at $\Gamma_{\widetilde{\mathfrak m}}$.
In the process we are able to construct an increasing sequence of interval
contained in the set of eigenvalues of $\overline \rho_{\mathfrak m}(\frob v)$
so that at the end we obtain a full set $\{ \lambda q_v^n : n \in \Zm \}$
which is of order the order of $q_v$ modulo $l$ which is, by hypothesis,
strictly greater than the dimension of $\overline \rho_{\mathfrak m}$, which is
absurd.

We refer the reader to the introduction of \S \ref{para-proof} for more details.

\mainmatter

\renewcommand{\theequation}{\arabic{section}.\arabic{subsection}.\arabic{thm}}

\renewcommand{\thethm}{\arabic{section}.\arabic{subsection}.\arabic{thm}}

\section{Recalls from \cite{boyer-imj}}

\subsection{Representations of $GL_d(K)$}
\label{para-gen}

We fix a finite extension $K/\Qm_p$ with residue field $\Fm_q$. We denote by $|-|$ its absolute
value.

For a representation $\pi$ of $GL_d(K)$ and $n \in \frac{1}{2} \Zm$, set 
$$\pi \{ n \}:= \pi \otimes q^{-n \val \circ \det}.$$

\begin{notas} \label{nota-ind}
For $\pi_1$ and $\pi_2$ representations of respectively $GL_{n_1}(K)$ and
$GL_{n_2}(K)$, we will denote by
$$\pi_1 \times \pi_2:=\ind_{P_{n_1,n_1+n_2}(K)}^{GL_{n_1+n_2}(K)}
\pi_1 \{ \frac{n_2}{2} \} \otimes \pi_2 \{-\frac{n_1}{2} \},$$
the normalized parabolic induced representation where for any sequence
$\underline r=(0< r_1 < r_2 < \cdots < r_k=d)$, we write $P_{\underline r}$ for 
the standard parabolic subgroup of $GL_d$ with Levi
$$GL_{r_1} \times GL_{r_2-r_1} \times \cdots \times GL_{r_k-r_{k-1}}.$$ 
\end{notas}

Recall that a representation
$\varrho$ of $GL_d(K)$ is called \emph{cuspidal} (resp. \emph{supercuspidal})
if it is not a subspace (resp. subquotient) of a proper parabolic induced representation.
When the field of coefficients is of characteristic zero then these two notions coincides,
but this is no more true for $\overline \Fm_l$.

\begin{defin} \label{defi-rep} (see \cite{zelevinski2} \S 9 and \cite{boyer-compositio} \S 1.4)
Let $g$ be a divisor of $d=sg$ and $\pi$ an irreducible cuspidal 
$\overline \Qm_l$-representation of $GL_g(K)$. 
The induced representation
$$\pi\{ \frac{1-s}{2} \} \times \pi \{ \frac{3-s}{2} \} \times \cdots \times \pi \{ \frac{s-1}{2} \}$$ 
holds a unique irreducible quotient (resp. subspace) denoted $\st_s(\pi)$ (resp.
$\speh_s(\pi)$); it is a generalized Steinberg (resp. Speh) representation.

Moreover the induced representation $\st_t(\pi \{ \frac{-r}{2} \}) \times \speh_r(\pi \{ \frac{t}{2} \} )$
(resp. of $\st_{t-1}(\pi \{ \frac{-r-1}{2} \}) \times \speh_{r+1}(\pi \{ \frac{t-1}{2} \} )$)
owns a unique irreducible subspace (resp. quotient), denoted
$LT_\pi(t-1,r)$.
\end{defin}

\rem These representations $LT_\pi(t-1,r)$ appear in the cohomology of the Lubin-Tate
spaces, cf. \cite{boyer-invent2}.

\begin{prop} \label{prop-red-modl} (cf. \cite{vigneras-livre} III.5.10)
Let $\pi$ be an irreducible cuspidal representation of $GL_g(K)$ with a stable
$\overline \Zm$-lattice\footnote{We say that $\pi$ is integral.}, then its modulo $l$ reduction
is irreducible and cuspidal but not necessary supercuspidal.
\end{prop}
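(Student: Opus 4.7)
The plan is to invoke the Bushnell--Kutzko theory of types, which realises any irreducible cuspidal $\overline \Qm_l$-representation $\pi$ of $GL_g(K)$ as a compact induction
$$\pi \simeq \text{c-Ind}_{\widetilde J}^{GL_g(K)} \Lambda,$$
where $\widetilde J \subset GL_g(K)$ is open and compact modulo the centre, and $\Lambda$ is a finite-dimensional irreducible smooth representation of $\widetilde J$ (an extended maximal simple type). The strategy is then to descend an invariant lattice of $\pi$ to one of $\Lambda$, reduce mod $l$, and compactly induce back up.

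First I would exploit that $\pi$ being entire forces $\Lambda$ itself to be integral. Given a $G$-stable $\overline \Zm_l$-lattice $L \subset \pi$, Frobenius reciprocity together with the $\Lambda$-isotypic decomposition of $L|_{\widetilde J}$ provides a $\widetilde J$-stable $\overline \Zm_l$-sublattice of $\Lambda$, which is automatically free of finite rank since $\dim \Lambda < \infty$. Call such a lattice $L_\Lambda$ and set $\overline \Lambda := L_\Lambda \otimes_{\overline \Zm_l} \overline \Fm_l$.

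Next I would verify that $\overline \Lambda$ remains irreducible. By the construction of simple types, $\Lambda$ restricts on the normal pro-$p$ subgroup $J^1 \triangleleft \widetilde J$ to an isotypic sum of copies of an irreducible representation $\eta$ built from a simple character $\theta$; because $l \neq p$, the category of smooth $\overline \Fm_l$-representations of $J^1$ is semisimple and mirrors the characteristic-zero situation, so $\overline \eta$ stays irreducible. The residual datum on the finite reductive quotient $\widetilde J / J^1 \simeq GL_m(\Fm_{q^r})$ is a cuspidal irreducible representation in characteristic zero, and its mod $l$ reduction is irreducible by James' theorem on modular representations of finite general linear groups. This is precisely the step where cuspidality of the residual finite-group piece can fail in characteristic $l$ (when $l$ divides some $q^i-1$), which will account for the loss of supercuspidality in the final conclusion.

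Finally, compact induction is exact and commutes with the functor $-\otimes_{\overline \Zm_l} \overline \Fm_l$, hence $\overline \pi \simeq \text{c-Ind}_{\widetilde J}^{GL_g(K)} \overline \Lambda$. Irreducibility of $\overline \pi$ follows from the Bushnell--Kutzko Hecke-algebra isomorphism $\HC(GL_g(K),\Lambda) \simeq \overline \Zm_l[T^{\pm 1}]$, which is compatible with reduction mod $l$ and transports the irreducibility of $\overline \Lambda$ over to $\overline \pi$. Cuspidality of $\overline \pi$ is then automatic: since $\widetilde J$ is compact modulo the centre, the matrix coefficients of $\overline \pi$ are compactly supported modulo the centre, so the Jacquet modules along every proper standard parabolic vanish. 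The main obstacle is the middle step—showing $\overline \Lambda$ stays irreducible—which ultimately reduces to the non-defining characteristic representation theory of $GL_m(\Fm_{q^r})$; the remaining ingredients (lattice descent, exactness of compact induction, and Hecke algebra compatibility) are formal once the theory of types is in hand.
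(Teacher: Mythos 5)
The paper gives no proof of this proposition at all — it is quoted directly from Vignéras (\cite{vigneras-livre} III.5.10) — and your sketch reproduces the standard argument of that reference: realize $\pi$ as compactly induced from an extended maximal simple type, reduce the (finite-dimensional, hence integral) type using semisimplicity of smooth $\overline \Fm_l$-representations of the pro-$p$ radical together with James's irreducibility theorem for cuspidal representations of finite general linear groups in non-defining characteristic, and conclude by compatibility of compact induction with reduction mod $l$. Two small imprecisions are worth fixing: the Hecke algebra of the \emph{extended} type $(\widetilde J,\Lambda)$ with $\widetilde J$ compact modulo the centre is just $\overline \Zm_l$ (the Laurent polynomial algebra $\overline \Zm_l[T^{\pm 1}]$ belongs to the non-extended type on the compact group $J$), and in the modular setting scalar endomorphisms alone do not force irreducibility — the last step should instead be the Mackey/trivial-intertwining argument, showing that any nonzero subrepresentation of $\mathrm{c\text{-}Ind}_{\widetilde J}^{GL_g(K)}\overline\Lambda$ contains the canonical copy of $\overline\Lambda$ and hence generates everything.
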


The supercuspidal support of the modulo $l$ reduction of a cuspidal representation, 
is a segment associated to some irreducible $\overline \Fm_l$-supercuspidal representation
$\varrho$ of $GL_{g_{-1}(\varrho)}(F_v)$ with $g=g_{-1}(\varrho) t$ where $t$ is 
either equal to $1$ or of the following shape
$t=m(\varrho)l^u$ with $u \geq 0$ and where $m(\varrho)$ is defined as follows.

\begin{nota} \label{nota-mvarrho}
We denote by $m(\varrho)$ the order of the Zelevinsky line 
$\{ \varrho(\delta): \delta \in \Zm \}$ of
$\varrho$ if it is not equal to $1$, otherwise $m(\varrho)=l$.
\end{nota}

\rem When $\varrho$ is the trivial representation then $m(1_v)$ is either the order of $q$ modulo $l$
when it is $>1$, otherwise $m(1_v)=l$. 
We say that such $\pi_v$ is of $\varrho$-type $u$ with $u \geq -1$.

\begin{nota}
For $\varrho$ an irreducible $\overline \Fm_l$-supercuspidal representation, we denote by
$\cusp_\varrho$ (resp. $\cusp_\varrho(u)$ for some $u \geq -1$) 
the set of equivalence classes of irreducible
$\overline \Qm_l$-cuspidal representations whose modulo $l$ reduction has 
for supercuspidal
support a segment associated to $\varrho$ (resp. of $\varrho$-type $u$).
\end{nota}

Let $u \geq 0$, $\pi_{v,u} \in \cusp_{\varrho}(u)$ and $\tau=\pi_{v,u}[s]_{D}$. Let then denote
by $\iota$ the image of $\speh_s(\varrho)$ by the modulo $l$ Jacquet-Langlands 
correspondence
defined at \S 1.2.4 de \cite{dat-jl}. Then the modulo $l$ reduction of $\tau$ is isomorphic to
\addtocounter{thm}{1}
\begin{equation} \label{eq-red-tau}
\iota \{-\frac{m(\tau)-1}{2} \} \oplus \iota \{-\frac{m(\tau)-3}{2} \} \oplus \cdots \oplus \iota \{ \frac{m(\tau)-1}{2} \}
\end{equation}
where $\iota \{ n \}:=\iota \otimes q^{-n \val \circ \nrd}$.

We want now to recall the notion of level of non degeneracy from \cite{zelevinski1} \S 4. 
The mirabolic subgroup
$M_d(K)$ of $GL_d(K)$ is the sub-group of matrices with last row $(0,\cdots,0,1)$: we denote by
$$V_d(K)=\{ (m_{i,j} \in P_d(K):~m_{i,j}= \delta_{i,j} \hbox{ for } j < n \}.$$ 
its unipotent radical. We fix a non trivial character $\psi$ of $K$ and let $\theta$ be the
character of $V_d(K)$ defined by $\theta( (m_{i,j}))=\psi(m_{d-1,d})$.
For $G=GL_r(K)$ or $M_r(K)$, we denote by $\alg(G)$ the abelian category of algebraic 
representations of $G$ and, following \cite{zelevinski1}, we introduce
$$\Psi^-: \alg(M_d(K)) \longrightarrow \alg(GL_{d-1}(K), \qquad \Phi^-: \alg (M_d) \longrightarrow
\alg (M_{d-1}(K))$$
defined by $\Psi^-=r_{V_d,1}$ (resp. $\Phi^-=r_{V_d,\theta}$) the functor of $V_{d-1}$
coinvariants (resp. $(V_{d-1},\theta)$-coinvariants), cf. \cite{zelevinski1} 1.8.
We also introduce the normalized compact induced functor
$$\Psi^+:=i_{V,1}: \alg(GL_{d-1}(K)) \longrightarrow \alg (M_d(K)),$$ 
$$\Phi^+:=i_{V,\theta}: \alg(M_{d-1}(K)) \longrightarrow \alg(M_d(K)).$$

\begin{prop} (\cite{zelevinski1} p451)
\begin{itemize}
\item The functors $\Psi^-$, $\Psi^+$, $\Phi^-$ and $\Phi^+$ are exact.

\item $\Phi^- \circ \Psi^+=\Psi^- \circ \Phi^+=0$.

\item $\Psi^-$ (resp. $\Phi^+$) is left adjoint to $\Psi^+$ (resp. $\Phi^-$) and the
following adjunction maps 
$$\Id \longrightarrow \Phi^- \Phi^+, \qquad \Psi^+ \Psi^- \longrightarrow \Id,$$
are isomorphisms meanwhile
$$0 \rightarrow \Phi^+ \Phi^- \longrightarrow \Id \longrightarrow \Psi^+ \Psi^- 
\rightarrow 0.$$
\end{itemize}
\end{prop}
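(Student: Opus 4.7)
The plan is to argue by contradiction: assume $H^i(\sh_K, V_\xi)_{\mathfrak m}$ has a nonzero $\overline\Zm_l$-torsion class for some $i$, and derive a contradiction from the rigid tensor-product shape of the middle-degree cohomology. Because $\overline \rho_{\mathfrak m}$ is absolutely irreducible, Nakayama shows that $\rho_{\mathfrak m}$ is a free $\Tm_{S,\mathfrak m}$-module of rank $d$, and by \cite{boyer-ihara} theorem 3.1.1 (resting on \cite{scholze-LT} theorem 5.6) there is an isomorphism of $\Tm_{S,\mathfrak m}[\gal_{F,S}]$-modules
$$H^{d-1}(X_{U,\bar \eta}, \overline \Zm_l)_{\mathfrak m} \simeq \sigma_{\mathfrak m} \otimes_{\Tm_{S,\mathfrak m}} \rho_{\mathfrak m}.$$
Since $\rho_{\mathfrak m}$ is free over $\Tm_{S,\mathfrak m}$, torsion in the middle-degree cohomology is exactly torsion in $\sigma_{\mathfrak m}$; through Hecke-module considerations and Poincar\'e duality the same $\sigma_{\mathfrak m}$ then controls torsion in all degrees, so it suffices to show that $\sigma_{\mathfrak m}$ is $\overline\Zm_l$-flat.

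First I would fix an auxiliary split place $v$ of $F$ and pass to the KHT integral model at $v$; let $\Psi_v$ denote the nearby cycles of $V_\xi$ on the special fiber $X_{U,\bar s_v}$. Following the author's earlier work on KHT varieties, $\Psi_v$ carries two natural filtrations: the stratification filtration $\Fill^\bullet \Psi_v$ indexed by the Newton depth $h$, whose graded pieces are intermediate extensions of Harris-Taylor local systems $HT(\pi_{v,u},\Pi_t)$; and the monodromy/adjunction filtration $M^\bullet \Psi_v$, whose graded pieces are expressed through generalized Steinberg and Speh representations, with integral structure governed by (\ref{eq-red-tau}) and the modulo $l$ Jacquet-Langlands correspondence. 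These yield the two spectral sequences (\ref{eq-ss1}) and (\ref{eq-ss2}), both abutting to $H^*(X_{U,\bar\eta}, V_\xi)_{\mathfrak m}$.

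Next I would compute the $E_1$-pages of both spectral sequences after localization at $\mathfrak m$. Via the Mantovan / Boyer product formula, each term factors as a tensor product of a Hecke/automorphic factor with a smooth factor of $LT_{\pi_{v,u}}(t-1,r)$-type. The absolute irreducibility of $\overline\rho_{\mathfrak m}$ forces a single supercuspidal type $\varrho$ to survive localization on each stratum, making both $E_1$-pages very rigid; in characteristic zero they degenerate and match the tensor factorization $\sigma_{\mathfrak m}[1/l] \otimes \rho_{\mathfrak m}[1/l]$ in middle degree, with trivial contribution outside. An $l$-torsion class in $\sigma_{\mathfrak m}$ would then show up as an $l$-torsion class on some $E_1$-term of (\ref{eq-ss1}) which, by compatibility with the generic (characteristic zero) picture, must be killed by a nontrivial differential crossing Newton strata. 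Tracked through (\ref{eq-ss2}), however, the same class is a permanent cycle because its Weil-Deligne label comes from a single Speh component; the Galois tensor factorization via $\rho_{\mathfrak m}$ prevents such a mismatch, giving the desired contradiction.

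The main obstacle will be making this "differential tracking" quantitatively precise: one must establish an integral version of the Mantovan-Boyer formula ensuring that each $E_1$-term of the two spectral sequences is $\overline\Zm_l$-free, and then verify that the differentials respect the Galois grading forced by the irreducibility of $\overline\rho_{\mathfrak m}$. This rests on the detailed reductions recalled at (\ref{eq-red-tau}) and on the exactness and adjunction properties of the Bernstein-Zelevinski functors $\Psi^\pm, \Phi^\pm$ recalled above; the irreducibility hypothesis is what rules out the non-generic mixing across inequivalent supercuspidals that would otherwise let a torsion class cancel invisibly in the two-filtration comparison.
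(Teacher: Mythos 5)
Your proposal does not address the statement you were asked to prove. The statement is the classical proposition of Bernstein--Zelevinsky (cited in the paper from \cite{zelevinski1}, p.~451) about the derivative functors $\Psi^-$, $\Psi^+$, $\Phi^-$, $\Phi^+$ on $\alg(M_d(K))$ and $\alg(GL_{d-1}(K))$: their exactness, the vanishing $\Phi^-\circ\Psi^+=\Psi^-\circ\Phi^+=0$, the adjunctions, and the exact sequence
$$0 \rightarrow \Phi^+\Phi^- \longrightarrow \Id \longrightarrow \Psi^+\Psi^- \rightarrow 0.$$
What you have written instead is a sketch of the paper's \emph{main theorem} (freeness of the localized cohomology when $\overline\rho_{\mathfrak m}$ is irreducible), via nearby cycles, the two filtrations, and the tensor-factorization of the global lattice. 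None of that bears on the representation-theoretic proposition at hand, which is purely local and elementary by comparison.

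A correct treatment would have to work directly with the definitions: $\Psi^-=r_{V_d,1}$ and $\Phi^-=r_{V_d,\theta}$ are (twisted) coinvariant functors for the unipotent radical $V_d(K)$, and $\Psi^+=i_{V,1}$, $\Phi^+=i_{V,\theta}$ are normalized compact inductions. Exactness of the coinvariant functors follows because $V_d(K)$ is a union of compact subgroups (so taking $(V,\theta)$-coinvariants is exact on smooth representations), and exactness of compact induction from an open subgroup is standard. The vanishing of the compositions and the adjunction isomorphisms come from Frobenius reciprocity together with an analysis of the restriction to $M_{d-1}(K)V_d(K)$ of a compactly induced representation (a Mackey-type decomposition), and the exact sequence $0\to\Phi^+\Phi^-\to\Id\to\Psi^+\Psi^-\to 0$ is precisely the filtration of a smooth $M_d(K)$-representation by the part on which $V_d(K)$ acts through $\theta$-isotypic pieces versus the $V_d(K)$-invariant quotient. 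Since the paper itself only cites this result without reproving it, the expected answer is either that citation or the short argument above; your text supplies neither.
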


\begin{defin}
For $\tau \in \alg(M_d(K))$, the representation 
$$\tau^{(k)}:=\Psi^- \circ (\Phi^-)^{k-1}(\tau)$$
is called the $k$-th derivative of $\tau$. If $\tau^{(k)}\neq 0$ and $\tau^{(m)}=0$
for all $m > k$, then $\tau^{(k)}$ is called the highest derivative of $\tau$.
\end{defin}

\begin{nota} (cf. \cite{zelevinski2} 4.3) \label{nota-nondegeneracy}
Let $\pi \in \alg(GL_d(K))$ (or $\pi \in \alg(M_d(K)$). The maximal number $k$ such that 
$(\pi_{|M_d(K)})^{(k)} \neq (0)$ is called the level of non-degeneracy of $\pi$ and 
denoted by $\lambda(\pi)$. We can also iterate the construction so that at the end we obtain 
a partition $\underline{\lambda(\pi)}$ of $d$.
\end{nota}

\begin{defin}
A representation $\pi$ of $GL_d(K)$, over $\overline \Qm_l$ or $\overline \Fm_l$, is then said generic
if its level of non degeneracy $\lambda(\pi)$ is equal to $d$.
\end{defin}

\rem Over $\overline \Qm_l$, an irreducible generic representation of
$GL_d(K)$ looks like $\st_{t_1}(\pi_1) \times \cdots \times \st_{t_r}(\pi_r)$
where $\pi_1,\cdots, \pi_r$ are irreducible cuspidal representations. Note
moreover that the modulo $l$ reduction of any irreducible generic representation
owns a unique generic irreducible sub-quotient.

\subsection{Shimura varieties of KHT type}

\label{para-geo}

Let $F=F^+ E$ be a CM field where $E/\Qm$ is quadratic imaginary and 
$F^+/\Qm$ is
totally real with a fixed real embedding $\tau:F^+ \hookrightarrow \Rm$. For a place $v$ of $F$,
we will denote by
\begin{itemize}
\item $F_v$ the completion of $F$ at $v$,

\item $\OC_v$ the ring of integers of $F_v$,

\item $\varpi_v$ a uniformizer,

\item $q_v$ the cardinal of the residual field $\kappa(v)=\OC_v/(\varpi_v)$.
\end{itemize}
Let $B$ be a division algebra with center $F$, of dimension $d^2$ such that at every place $x$ of $F$,
either $B_x$ is split or a local division algebra and suppose $B$ provided with an involution of
second kind $*$ such that $*_{|F}$ is the complex conjugation. For any
$\beta \in B^{*=-1}$, denote by $\sharp_\beta$ the involution $x \mapsto x^{\sharp_\beta}=\beta x^*
\beta^{-1}$ and let $G/\Qm$ be the group of similitudes, denoted by 
$G_\tau$ in \cite{h-t}, defined for every $\Qm$-algebra $R$ by 
$$
G(R)  \simeq   \{ (\lambda,g) \in R^\times \times (B^{op} \otimes_\Qm R)^\times  \hbox{ such that } 
gg^{\sharp_\beta}=\lambda \}
$$
with $B^{op}=B \otimes_{F,c} F$. 
If $x$ is a place of $\Qm$ split $x=yy^c$ in $E$ then 
\addtocounter{thm}{1}
\begin{equation} \label{eq-facteur-v}
G(\Qm_x) \simeq (B_y^{op})^\times \times \Qm_x^\times \simeq \Qm_x^\times \times
\prod_{z_i} (B_{z_i}^{op})^\times,
\end{equation}
where, identifying places of $F^+$ over $x$ with places of $F$ over $y$,
$x=\prod_i z_i$ in $F^+$.

\noindent \textbf{Convention}: for $x=yy^c$ a place of $\Qm$ split in $E$ and $z$ 
a place of $F$ over $y$, we shall make throughout the text, the following abuse of notation by denoting 
$G(F_z)$ in place of the factor $(B_z^{op})^\times$ in the formula (\ref{eq-facteur-v}).

In \cite{h-t}, the authors justify the existence of some $G$ like before such that moreover
\begin{itemize}
\item if $x$ is a place of $\Qm$ non split in $E$ then $G(\Qm_x)$ is quasi split;

\item the invariants of $G(\Rm)$ are $(1,d-1)$ for the embedding $\tau$ and $(0,d)$ for the others.
\end{itemize}

As in  \cite{h-t} bottom of page 90, a compact open subgroup $U$ of $G(\Am^\oo)$ is said 
\emph{small enough}
if there exists a place $x$ such that the projection from $U^v$ to $G(\Qm_x)$ does not contain any 
element of finite order except identity.

\begin{nota}
Denote by $\IC$ the set of open compact subgroups small enough of $G(\Am^\oo)$.
For $I \in \IC$, write $\sh_{I,\eta} \longrightarrow \spec F$ for the associated
Shimura variety of Kottwitz-Harris-Taylor type.
\end{nota}

\begin{defin} \label{defi-spl}
Denote by $\spl$ the set of  places $v$ of $F$ such that $p_v:=v_{|\Qm} \neq l$ is split in $E$ and let
$B_v^\times \simeq GL_d(F_v)$.  For each $I \in \IC$, we write
$\spl(I)$ for the subset of $\spl$ of places which does not divide $I$.
\end{defin}

In the sequel, $v$ and $w$ will denote places of $F$ in $\spl$. 
For such a place $v$,
the scheme $\sh_{I,\eta}$ has a projective model $\sh_{I,v}$ over $\spec \OC_v$
with special fiber $\sh_{I,s_v}$. For $I$ going through $\IC$, the projective system $(\sh_{I,v})_{I\in \IC}$ 
is naturally equipped with an action of $G(\Am^\oo) \times \Zm$ such that any
$w_v$ in the Weil group $W_v$ of $F_v$ acts by $-\deg (w_v) \in \Zm$,
where $\deg=\val \circ \art^{-1}$ and $\art^{-1}:W_v^{ab} \simeq F_v^\times$ is the isomorphism of Artin
sending the geometric Frobenius to uniformizers.

\begin{notas} 
For $I \in \IC$, the Newton stratification of the geometric special fiber $\sh_{I,\bar s_v}$ is denoted by
$$\sh_{I,\bar s_v}=:\sh^{\geq 1}_{I,\bar s_v} \supset \sh^{\geq 2}_{I,\bar s_v} \supset \cdots \supset 
\sh^{\geq d}_{I,\bar s_v}$$
where $\sh^{=h}_{I,\bar s_v}:=\sh^{\geq h}_{I,\bar s_v} - \sh^{\geq h+1}_{I,\bar s_v}$ is an affine 
scheme, smooth of pure dimension $d-h$ built up by the geometric 
points whose connected part of its Barsotti-Tate group is of rank $h$.
For each $1 \leq h <d$, write
$$i_{h}:\sh^{\geq h}_{I,\bar s_v} \hookrightarrow \sh^{\geq 1}_{I,\bar s_v}, \quad
j^{\geq h}: \sh^{=h}_{I,\bar s_v} \hookrightarrow \sh^{\geq h}_{I,\bar s_v},$$
and $j^{=h}=i_h \circ j^{\geq h}$.
\end{notas}

Let $\sigma_0:E \hookrightarrow
\overline{\Qm}_l$ be a fixed embedding and write $\Phi$ for the set of embeddings 
$\sigma:F \hookrightarrow \overline \Qm_l$ whose restriction to $E$ equals $\sigma_0$.
There exists then, cf. \cite{h-t} p.97, an explicit bijection between irreducible algebraic representations 
$\xi$ of $G$ over $\overline \Qm_l$ and $(d+1)$-uple
$\bigl ( a_0, (\overrightarrow{a_\sigma})_{\sigma \in \Phi} \bigr )$
where $a_0 \in \Zm$ and for all $\sigma \in \Phi$, we have $\overrightarrow{a_\sigma}=
(a_{\sigma,1} \leq \cdots \leq a_{\sigma,d} )$. 
We then denote by 
$$V_{\xi,\overline \Zm_l}$$ 
the associated $\overline \Zm_l$-local system on $\sh_\IC$.
Recall that an irreducible automorphic representation $\Pi$ is said $\xi$-cohomological if there exists
an integer $i$ such that
$$H^i \bigl ( ( \lie ~G(\Rm)) \otimes_\Rm \Cm,U,\Pi_\oo \otimes \xi^\vee \bigr ) \neq (0),$$
where $U$ is a maximal open compact subgroup modulo the center of $G(\Rm)$.
Let $d_\xi^i(\Pi_\oo)$ be the dimension of this cohomology group.

\subsection{Cohomology of the Newton strata}
\label{para-hecke}

\begin{nota} \label{nota-hixi}
For $1 \leq h \leq d$, let $\IC_v(h)$ be the set of open compact subgroups 
$$U_v(\underline m,h):= 
U_v(\underline m^v) \times \left ( \begin{array}{cc} I_h & 0 \\ 0 & K_v(m_1) \end{array} \right ),$$
where $K_v(m_1)=\ker \bigl ( GL_{d-h}(\OC_v) \longrightarrow GL_{d-h}(\OC_v/ (\varpi_v^{m_1})) \bigr )$.
We then denote by $[H^i(h,\xi)]$ (resp. $[H^i_!(h,\xi)]$) the image of
$$\lim_{\atop{\longrightarrow}{I \in \IC_v(h)}} H^i(\sh_{I,\bar s_v,1}^{\geq h}, V_{\xi,\overline \Qm_l}[d-h]) 
\qquad \hbox{resp. }
\lim_{\atop{\longrightarrow}{I \in \IC_v(h)}} H^i(\sh_{I,\bar s_v,1}^{\geq h}, j^{\geq h}_{1,!} V_{\xi,\overline \Qm_l}[d-h]) 
$$ 
inside the Grothendieck $\groth(v,h)$ of admissible representations of 
$G(\Am^{\oo}) \times GL_{d-h}(F_v) \times \Zm$.
\end{nota}

\rem An element $\sigma \in W_v$ acts through $-\deg \sigma \in \Zm$ and $\Pi_{p_v,0}(\art^{-1} (\sigma))$.
We moreover consider the action of $GL_{h}(F_v)$ through  
$\val \circ \det: GL_{h}(F_v) \longrightarrow \Zm$ and finally
$P_{h,d}(F_v)$ through its Levi factor $GL_{h}(F_v) \times GL_{d-h}(F_v)$, i.e.
its unipotent radical acts trivially.

From \cite{boyer-imj} proposition 3.6, for any irreducible tempered automorphic representation 
$\Pi$ of $G(\Am)$ and for every $i \neq 0$, the $\Pi^{\oo,v}$-isotypic component of
$[H^i(h,\xi)]$ and $[H^i_!(h,\xi)]$ are zero. About the case $i=0$, for $\Pi$ an irreducible
automorphic tempered representation $\xi$-cohomological, its local component at $v$ is generic and so looks like
$$\Pi_v \simeq \st_{t_1}(\pi_{v,1}) \times \cdots \times \st_{t_u}(\pi_{v,u}),$$
where for $i=1,\cdots,u$, $\pi_{v,i}$ is an irreducible cuspidal representation of est une 
$GL_{g_i}(F_v)$.

\begin{prop} \label{prop-temperee-explicite} (cf. \cite{boyer-imj} proposition 3.9)
With the previous notations, we order the $\pi_{v,i}$ such that the first $r$-ones are unramified
characters. Then the $\Pi^{\oo,v}$-isotypic component of $[H^0(h,\xi)]$ is then equals to
$$\Bigl ( \frac{\sharp \ker^1(\Qm,G)}{d} \sum_{\Pi' \in \UC_G(\Pi^{\oo,v})} 
m(\Pi') d_\xi(\Pi'_\oo) \Bigr )  \Bigl ( \sum_{1 \leq k \leq r:~ t_k=h} \Pi_v^{(k)} \otimes \chi_{v,k}
\chi^{\frac{d-h}{2}} \Bigr )$$
where
\begin{itemize}
\item $ \ker^1(\Qm,G)$ is the subset of elements of $H^1(\Qm,G)$ which become trivial
in $H^1(\Qm_{p'},G)$ for every prime $p'$;

\item $\Pi_v^{(k)}:= \st_{t_1}(\chi_{v,1}) \times \cdots \times \st_{t_{k-1}}(\chi_{v,k-1}) \times
\st_{t_{k+1}}(\chi_{v,k+1}) \times \cdots \times \st_{t_u}(\chi_{v,u})$ and
 
\item $\Xi:\frac{1}{2} \Zm \longrightarrow \overline \Zm_l^\times$ 
is defined by $\Xi(\frac{1}{2})=q_v^{\frac{1}{2}}$. 

\item $\UC_G(\Pi^{\oo,v})$ is the set of equivalence classes of irreducible automorphic 
representations $\Pi'$ of $G(\Am)$ such that $(\Pi')^{\oo,v} \simeq \Pi^{\oo,v}$.
\end{itemize}
\end{prop}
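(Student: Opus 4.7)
The plan is to combine three well-established ingredients from the theory of KHT Shimura varieties: a Matsushima–Kottwitz-type description of the full middle degree cohomology, a Mantovan-style decomposition of the Newton stratum $X^{\geq h}_{I,\bar s_v,1}$, and the explicit cohomology of the Lubin–Tate tower in its generalized Steinberg form.

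First I would start from the global input. Stabilizing the trace formula for $G$ à la Kottwitz and applying it to the full cohomology $\lim_I H^*(X_{I,\bar\eta},V_{\xi,\overline\Qm_l})$, the $\Pi^{\oo,v}$-isotypic component is
\[
\Bigl(\tfrac{\sharp \ker^1(\Qm,G)}{d}\sum_{\Pi'\in\UC_G(\Pi^{\oo,v})} m(\Pi')\,d_\xi(\Pi'_\oo)\Bigr)\cdot\Pi_v\otimes\LFF(\Pi_v),
\]
where $\LFF(\Pi_v)$ is the associated Weil–Deligne representation. This already produces the global prefactor in the statement; the constant $\sharp \ker^1(\Qm,G)/d$ is the standard Kottwitz/base-change normalization, and the restriction to $\UC_G(\Pi^{\oo,v})$ records that only the level-$v$ component is allowed to vary.

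Next I would turn to the geometric side. Via the Mantovan / Harris–Taylor product formula, the nearby cohomology of the Newton stratum $X^{\geq h}_{I,\bar s_v,1}$ factorizes as a smash product of an Igusa variety of height $h$ with the cohomology of the Lubin–Tate tower at height $h$. By Faltings–Strauch, refined through the Harris–Taylor and Boyer (see \cite{boyer-invent2}) computations, the latter realizes the local Jacquet–Langlands correspondence for generalized Steinbergs: the $\st_h(\chi)$-isotypic piece of the Lubin–Tate tower lies in middle degree and pairs with $\chi\circ\nrd$ on the division-algebra side, with the Tate twist $\Xi^{(d-h)/2}$ produced by the half-sum of positive roots of $P_{h,d}$.

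Now I would apply parabolic restriction along $P_{h,d}(F_v)$ to a tempered $\Pi_v\simeq \st_{t_1}(\pi_{v,1})\times\cdots\times\st_{t_u}(\pi_{v,u})$ (which the notation sorts so that the first $r$ factors are unramified characters). The geometric Jacquet–restriction only picks out, for each $k$ with $t_k=h$, a summand of the form $\st_h(\pi_{v,k})\boxtimes \Pi_v^{(k)}$. Contracting with the Lubin–Tate contribution just described, the factors with $\pi_{v,k}$ a non-character cuspidal produce classes in strictly positive degree, which are killed by \cite{boyer-imj} Proposition 3.6; only the unramified characters $\pi_{v,k}=\chi_{v,k}$ with $k\leq r$ and $t_k=h$ survive in $[H^0(h,\xi)]$, contributing $\Pi_v^{(k)}\otimes \chi_{v,k}\Xi^{(d-h)/2}$. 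The unipotent radical of $P_{h,d}(F_v)$ acts trivially by construction, the $GL_h(F_v)$-action factors through $\val\circ\det$, and the Weil-group action on the Lubin–Tate factor via $\art^{-1}$ matches $-\deg\sigma$ composed with $\chi_{v,k}$, exactly as prescribed.

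The main obstacle is the careful bookkeeping of the various shifts and twists: the degree shift $[d-h]$ in Notation \ref{nota-hixi}, the half-integer Tate twists arising from the normalized induction convention of Notation \ref{nota-ind}, and the compatibility between Artin's reciprocity (sending geometric Frobenius to a uniformizer) and the unramified characters $\chi_{v,k}$. One has to invoke the explicit shape of the local Langlands correspondence for $\st_{t_k}(\chi_{v,k})$ and Shin's averaging formula to pin down that only these characters, and not the other cuspidals in the same Bernstein block, contribute to the freely-acting middle-degree piece; this is the step where the reduction to unramified characters $k\leq r$ really takes place.
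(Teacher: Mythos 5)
Your outline follows the same route as the proof of \cite{boyer-imj} Proposition 3.9, which the present paper only recalls: the global prefactor $\frac{\sharp\ker^1(\Qm,G)}{d}\sum m(\Pi')d_\xi(\Pi'_\oo)$ comes from the stabilized counting-point formula for Igusa varieties (Kottwitz, Shin), the local factor from the reduction $\Red^h(\Pi_v)$ (normalized Jacquet module along $P_{h,d-h}$ followed by inverse Jacquet--Langlands on the $GL_h$-block), and the concentration in degree $0$ from temperedness via \cite{boyer-imj} Proposition 3.6. One step, however, is misattributed. The cuspidals $\pi_{v,k}$ with $t_kg_k=h$ that are not unramified characters are \emph{not} eliminated because they would "produce classes in strictly positive degree killed by Proposition 3.6": that proposition concentrates \emph{all} tempered contributions in degree $0$ and makes no distinction between characters and other cuspidals. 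They are eliminated because $[H^0(h,\xi)]$ is taken at levels $U_v(\underline m,h)$ that are maximal (equal to $I_h$) on the connected part of the Barsotti--Tate group, so the coefficient sheaf on the stratum is the Harris--Taylor system attached to the trivial representation of $\OC_{D_{v,h}}^\times$; in Shin's formula only those square-integrable constituents of the Jacquet module whose transfer $\JL^{-1}(\st_{t_k}(\pi_{v,k}))=\pi_{v,k}[t_k]_D$ is trivial on $\OC_{D_{v,h}}^\times$ --- i.e. the unramified characters $\chi_{v,k}\circ\nrd$ --- pair nontrivially. This is precisely the "reduction to $k\leq r$" you flag at the end, so the issue is one of mechanism rather than of conclusion. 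Relatedly, since $[H^0(h,\xi)]$ is the cohomology of the stratum itself with coefficients $V_{\xi}$ and not of the nearby cycles over it, the Lubin--Tate factor of Mantovan's product formula does not actually intervene here; only the Igusa-variety factor does.
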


\rem In particular if $[H^0(h,\xi)]$ has non trivial invariant vectors under some open
compact subgroup $I \in \IC_v(h)$ which is maximal at $v$, then the local component
of $\Pi$ at $v$ is of the following shape $\st_h(\chi_{v,1}) \times \chi_{v,2} \times \cdots \chi_{v,d-h}$
where the $\chi_{v,i}$ are unramified characters.

\begin{defin}
For a finite set $S$ of places of $\Qm$ containing the places where $G$ is ramified, 
denote by $\Tm^S_{abs}:=\prod_{x \not \in S} \Tm_{x,abs}$ the 
abstract unramified  Hecke algebra
where
$\Tm_{x,abs} \simeq \overline \Zm_l[X^{un}(T_x)]^{W_x}$ for $T_x$ a split torus,
$W_x$ the spherical Weyl group and $X^{un}(T_x)$ the set of $\overline \Zm_l$-unramified 
characters of $T_x$. 
\end{defin}

\noindent \textit{Example}.
For $w \in \spl$, we have
$$\Tm_{w,abs}=\overline \Zm_l \bigl [T_{w,i}:~ i=1,\cdots,d \bigr ],$$
where $T_{w,i}$ is the characteristic function of
$$GL_d(\OC_w) \diag(\overbrace{\varpi_w,\cdots,\varpi_w}^{i}, \overbrace{1,\cdots,1}^{d-i} ) 
GL_d(\OC_w) \subset  GL_d(F_w).$$

\begin{nota} 
Let $\Tm^S_\xi$ be the image of $\Tm^S_{abs}$ inside 
$$\bigoplus_{i =0}^{2d-2}
\lim_{\atop{\rightarrow}{I}} H^i(\sh_{I,\bar \eta},V_{\xi,\overline \Qm_l})$$
where the limit is taken over 
the ideals $I$ which are maximal at each places outside $S$. For $I$
an open compact subgroup maximal at each places outside $S$,
we will also denote by $\Tm^S_{I,\xi}$ the image of $\Tm^S_{abs}$ inside
$H^{d-1}(\sh_{I,\bar \eta},V_{\xi,\overline \Qm_l})$.
\end{nota}

Let state some remarks about these Hecke algebras.
\begin{itemize}
\item In \cite{boyer-imj}, we proved that if $\mathfrak m$ is a maximal ideal of 
$\Tm^S_\xi$ such that there exists $i$ with 
$H^i(\sh_{I,\bar \eta},V_{\xi,\overline \Zm_l})_{\mathfrak m}
\neq (0)$ for $I$ maximal at each places outside $S$, then
$(\Tm^S_\xi)_{\mathfrak m} \neq (0)$, i.e. torsion cohomology classes raise in 
characteristic zero. In particular to define $\Tm^S_{I,\xi}$ there is no difference
taking cohomology with $\overline \Zm_l$ or $\overline \Qm_l$ coefficients,
and consider torsion classes or only the free quotients.

\item As explained in the introduction, we will consider maximal ideals
$\mathfrak m$ such that $\overline \rho_{\mathfrak m}$ is irreducible so that
the $\overline Qm_l$-cohomology groups are all concentrated in middle degree,
i.e. in degree $0$ if we deal with perverse sheaves.

\item With the notations of \S \ref{para-HT} about the Harris-Taylor local systems,
in \cite{boyer-compositio}, we proved that, except for the $GL_d(F_v)$-action,
the irreducible sub-quotients of the
$\overline \Qm_l$-cohomology groups of $j^{\geq tg}_! HT(\pi_v,\Pi_t)$ or 
$\lexp p j^{\geq tg}_{!*} HT(\pi_v,\Pi_t)$ are also sub-quotients of
the cohomology of $\sh_{I,\bar \eta}$. Moreover their torsion classes also
raises in characteristic zero. In particular if $\mathfrak m$ is such that
$\overline \rho_{\mathfrak m}$ is irreducible, then the image of $\Tm^S_\xi$
inside $H^0(\sh_{I,\bar s_v},\lexp p j^{\geq tg}_{!*} HT(\pi_v,\Pi_t))_{\mathfrak m}$
factors through $\Tm^S_{I',\chi}$ for $I'$ such that $(I')^v=I^v$. The same
is also true for $j^{\geq tg}_{!} HT(\pi_v,\Pi_t)$. In the case where $\pi_v$
is a unramified character, then you can take $I'_v$ for the Iwahori subgroup
of $GL_d(\OC_v)$.

\end{itemize}

The minimal prime ideals of $\Tm^S_{\xi}$ are the prime ideals above the zero ideal of 
$\overline \Zm_l$ and are then in bijection with the prime ideals of 
$\Tm^S_{\xi} \otimes_{\overline \Zm_l} \overline \Qm_l$. 
To such an ideal, which corresponds
to give a collection of Satake parameters, is then associated a unique near equivalence class in
the sense of \cite{y-t}, denoted by $\Pi_{\widetilde{\mathfrak m}}$, which is the finite set of
irreducible automorphic cohomological representations whose multi-set of Satake parameters
at each place $x \in \unr(I)$, is given by $S_{\widetilde{\mathfrak m}}(x)$ the multi-set of roots of
the Hecke polynomial
$$P_{\widetilde{\mathfrak{m}},w}(X):=\sum_{i=0}^d(-1)^i q_w^{\frac{i(i-1)}{2}} 
T_{w,i,\widetilde{\mathfrak m}} X^{d-i} \in \overline \Qm_l[X]$$
i.e.
$$S_{\widetilde{\mathfrak{m}}}(w) := \bigl \{ \lambda \in \Tm^S_{\xi} \otimes_{\overline \Zm_l}
\overline \Qm_l / \widetilde{\mathfrak m} \simeq \overline \Qm_l \hbox{ such that }
P_{\widetilde{\mathfrak{m}},w}(\lambda)=0 \bigr \}.$$
Thanks to \cite{h-t} and \cite{y-t}, we denote by
$$\rho_{\widetilde{\mathfrak m}}:\gal(\overline F/F) \longrightarrow GL_d(\overline \Qm_l)$$ 
the Galois representation associated to any $\Pi \in \Pi_{\widetilde{\mathfrak m}}$.
Recall that the modulo $l$ reduction of $\rho_{\widetilde{\mathfrak m}}$ depends only
of $\mathfrak m$, and was denoted above $\overline{\rho_{\mathfrak m}}$.
For every $w \in \spl(I)$, we also denote by $S_{\mathfrak{m}}(w)$ the multi-set of modulo $l$
Satake parameters at $w$ given as the multi-set of roots of
$$P_{\mathfrak{m},w}(X):=\sum_{i=0}^d(-1)^i q_w^{\frac{i(i-1)}{2}} \overline{T_{w,i}} X^{d-i} 
\in \overline \Fm_l[X]$$
i.e.
$$S_{\mathfrak{m}}(w) := \bigl \{ \lambda \in \Tm^S_\xi/\mathfrak m \simeq \overline \Fm_l 
\hbox{ such that } P_{\mathfrak{m},w}(\lambda)=0 \bigr \}.$$
Using the arguments of \cite{cara-m} and following \cite{scholze-torsion} V.4.4, 
we then deduce the existence, for $\gal_{F,S}$ the Galois group of the maximal
extension of $F$ unramified outside $S$, of
\addtocounter{thm}{1}
\begin{equation} \label{eq-rhom}
\rho_{\xi,\mathfrak m}: \gal_{F,S} \longrightarrow GL_d((\Tm^S_\xi)_{\mathfrak m})
\end{equation}
interpolating the $\rho_{\widetilde{\mathfrak m}}$, so that in particular for all $u \not \in S$,
$\det(1-X \frob_u|\rho_{\mathfrak m})$ is equal to the 
Hecke polynomial.

\rem In  \cite{scholze-torsion}, the author constructs 
$\rho_{\mathfrak m}: \gal_{F,S} \longrightarrow GL_d((\Tm^S_\xi)_{\mathfrak m}/J)$
where $J$ is a nilpotent ideal but it seems from incoming work that on can arrange
$J$ to be zero.

%
%
%
%

\section{About the nearby cycle perverse sheaf}

Our strategy to compute the cohomology of the KHT-Shimura variety $\sh_{I,\bar \eta}$ with
coefficients in $V_{\xi,\overline \Zm_l}$, is to realize it as the outcome of the nearby cycles 
spectral sequence at some place $v \in \spl$.

Note that the role of the local system $V_{\xi,\overline \Zm_l}$
associated to $\xi$ is completely harmless when
dealing with sheaves: one just have to add a tensor product with it to all the
statements without the index $\xi$. In the following we will sometimes not mention the index
$\xi$ in the statements to make formulas more readable. Of course when looking at the 
cohomology groups, the role of $V_{\xi,\overline \Zm_l}$ is crucial as it selects the
automorphic representations which contribute to the cohomology.

\subsection{The case where the level at $v$ is maximal}
\label{para-max}

By the smooth base change theorem, we have
$H^i(\sh_{I,\bar \eta_v}, V_\xi) \simeq H^i(\sh_{I,\bar s_v},V_\xi)$.
As for each $1 \leq h \leq d-1$, the open Newton stratum $\sh^{=h}_{I,\bar s_v}$ is affine
then $H^i_c(\sh^{=h}_{I,\bar s_v},V_{\xi,\overline \Zm_l}[d-h])$ is zero for $i<0$ and free for $i=0$.
Using this property and the following short exact sequence
of free perverse sheaves
\begin{multline*}
0 \rightarrow i_{h+1,*} V_{\xi,\overline \Zm_l,|\sh_{I,\bar s_v}^{\geq h+1}}[d-h-1] \longrightarrow 
j^{\geq h}_! j^{\geq h,*} V_{\xi,\overline \Zm_l,|\sh^{\geq h}_{I,\bar s_v}}[d-h] \\ \longrightarrow 
V_{\xi,\overline \Zm_l,|\sh^{\geq h}_{I,\bar s_v}}[d-h] \rightarrow 0,
\end{multline*}
we then obtain for every $i>0$
\addtocounter{thm}{1}
\begin{equation} \label{eq-sec}
0 \rightarrow H^{-i-1}(\sh^{\geq h}_{I,\bar s_v},V_{\xi,\overline \Zm_l}[d-h]) \longrightarrow
H^{-i}(\sh^{\geq h+1}_{I,\bar s_v},V_{\xi,\overline \Zm_l}[d-h-1]) \rightarrow 0,
\end{equation}
and for $i=0$,
\addtocounter{thm}{1}
\begin{multline} \label{eq-sec0}
0 \rightarrow H^{-1}(\sh^{\geq h}_{I,\bar s_v},V_{\xi,\overline \Zm_l}[d-h]) \longrightarrow
H^{0}(\sh^{\geq h+1}_{I,\bar s_v},V_{\xi,\overline \Zm_l}[d-h-1]) \longrightarrow \\
H^0(\sh^{\geq h}_{I,\bar s_v},j^{\geq h}_! j^{\geq h,*} V_{\xi,\overline \Zm_l}[d-h] ) \longrightarrow
H^{0}(\sh^{\geq h}_{I,\bar s_v},V_{\xi,\overline \Zm_l}[d-h]) \rightarrow \cdots
\end{multline}
In \cite{boyer-imj}, arguing by induction from $h=d$ to $h=1$, we prove that for a maximal
ideal $\mathfrak m$ of $\Tm^S_{\xi}$ such that $S_{\mathfrak m}(v)$ does not contain any subset 
of the form $\{ \alpha,q_v \alpha \}$, all the cohomology groups 
$H^i(\sh_{I,\bar s_v}^{\geq h},V_{\xi,\overline \Zm_l})_{\mathfrak m}$ are free: note that in order
to deal with $i\geq 0$, one has to use the Grothendieck-Verdier duality.

Without this hypothesis, arguing similarly, we conclude that any torsion cohomology class
comes from a non strict map 
\addtocounter{thm}{1}
\begin{equation} \label{eq-map-strict-fini}
H^{0}_{free}(\sh^{\geq h+1}_{I,\bar s_v},V_{\xi,\overline \Zm_l}[d-h-1])_{\mathfrak m} \longrightarrow
H^0(\sh^{\geq h}_{I,\bar s_v},j^{\geq h}_! j^{\geq h,*} V_{\xi,\overline \Zm_l}[d-h] )_{\mathfrak m}.
\end{equation}
In particular it raises in characteristic zero to some free subquotient of
$H^0(\sh^{\geq h}_{I,\bar s_v},j^{\geq h}_! j^{\geq h,*} V_{\xi,\overline \Zm_l}[d-h] )_{\mathfrak m}$.

%

We argue by absurdity and we suppose there exists $I \in \IC$ such that there
exists non trivial torsion cohomology classes in the $\mathfrak m$-localized cohomology of 
$\sh_{I,\bar \eta_v}$ with coefficients in $V_{\xi,\overline \Zm_l}$. Fix such finite level $I$.

\begin{prop} \label{prop-h0I} (cf. \cite{boyer-imj} lemme 4.13)
Consider $h_0(I)$ maximal such that there exists $i \in \Zm$ with
$H^{d-h_0(I)+i}(\sh_{I,\bar s_v}^{\geq h_0(I)}, V_{\xi,\overline \Zm_l})_{\mathfrak m,tor} 
\neq (0)$. Then we have the following properties:
\begin{itemize}
\item $i=0,1$;

\item for all $1 \leq h \leq h_0(I)$ and $i<h-h_0(I)$,
$$H^{d-h+i}(\sh_{I,\bar s_v}^{\geq h} V_{\xi,\overline \Zm_l})_{\mathfrak m,tor} =(0)$$ 
while for $i=h-h_0(I)$ it is non trivial.
\end{itemize}
\end{prop}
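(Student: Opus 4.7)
The plan is to argue by descending induction on $h$ from $h_0$ down to $1$. The main tool is the long exact sequence attached to the short exact sequence of perverse sheaves
\begin{equation*}
0 \longrightarrow i_{h+1,*} V_{\xi,\overline \Zm_l}[d-h-1] \longrightarrow j^{\geq h}_! V_{\xi,\overline \Zm_l}[d-h] \longrightarrow V_{\xi,\overline \Zm_l}[d-h] \longrightarrow 0
\end{equation*}
on $X^{\geq h}_{I,\bar s_v}$, whose middle term, by the affineness of $X^{=h}_{I,\bar s_v}$, has cohomology concentrated in degree $0$ where it is a free $\overline \Zm_l$-module. The associated sequences (\ref{eq-sec}) and (\ref{eq-sec0}) then give, away from the middle range of degrees, isomorphisms between $H^\bullet(X^{\geq h}_{I,\bar s_v}, V_\xi)_{\mathfrak m}$ and $H^\bullet(X^{\geq h+1}_{I,\bar s_v}, V_\xi)_{\mathfrak m}$; and in the middle range, a $5$-term exact sequence in which torsion enters only via the cokernel of a non-strict map landing in the free $H^0$ of the middle term.

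For the base case $h=h_0$, maximality of $h_0$ ensures that $H^\bullet(X^{\geq h_0+1}_{I,\bar s_v}, V_\xi)_{\mathfrak m}$ is torsion-free. The long exact sequence immediately forces $H^k(X^{\geq h_0}_{I,\bar s_v}, V_\xi)_{\mathfrak m}$ to be torsion-free for $k \leq d-h_0-1$ (as a submodule of a torsion-free group), and Grothendieck--Verdier duality on the projective scheme $X^{\geq h_0}_{I,\bar s_v}$ of dimension $d-h_0$ handles the degrees $k \geq d-h_0+2$. The only remaining possibilities for torsion are $k \in \{d-h_0, d-h_0+1\}$, i.e.\ $i \in \{0,1\}$, establishing the first assertion. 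The duality is crucial here because over $\overline \Zm_l$ it couples torsion in degree $k$ with torsion in degree $2(d-h_0)-k-1$ via the $\Ext^1$ shift, which propagates torsion from the "natural" degree $d-h_0$ to the adjacent $d-h_0+1$.

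For the inductive step, assume the proposition at $h+1$: the first torsion in $H^\bullet(X^{\geq h+1}_{I,\bar s_v}, V_\xi)_{\mathfrak m}$ sits in degree $d-(h+1)+(h_0-h-1)$. At level $h$, outside the middle window $\{d-h-1, d-h\}$, the isomorphisms provided by (\ref{eq-sec}) preserve this torsion structure, while in the middle window the $5$-term sequence coming from (\ref{eq-sec0}) may create fresh torsion via the non-strict map
$$H^0_{\mathrm{free}}(X^{\geq h+1}_{I,\bar s_v}, V_\xi[d-h-1])_{\mathfrak m} \longrightarrow H^0(X^{\geq h}_{I,\bar s_v}, j^{\geq h}_! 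V_\xi[d-h])_{\mathfrak m}.$$
Combining the LES with its Verdier dual shifts the first torsion index from $h_0-h-1$ up to $h_0-h$, as predicted, and gives the desired vanishing for $i<h_0-h$.

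The principal obstacle is the \emph{non-triviality} assertion at $i=h_0-h$: the torsion at each level comes either from the image of torsion already present at level $h+1$ or from the freshly created cokernel piece in the $5$-term sequence, and one must rule out the possibility that these two contributions cancel, or that subsequent maps in the LES kill them. The bookkeeping here requires carefully threading Verdier duality through the induction so as to identify precisely which piece of the torsion survives in which degree at each step; this is the technical heart of the argument and the step where the absolute irreducibility hypothesis on $\overline \rho_{\mathfrak m}$ will be indispensable to prevent pathological cancellations.
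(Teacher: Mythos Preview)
Your overall strategy---descending induction on $h$ using (\ref{eq-sec}) and (\ref{eq-sec0}), the affineness of the open strata to control the middle term, and Grothendieck--Verdier duality for the large degrees---is exactly the argument the paper sketches just before stating the proposition (and attributes to \cite{boyer-imj}). So the first three paragraphs of your proposal are essentially the paper's own proof.

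Your final paragraph, however, misidentifies the difficulty and introduces an unnecessary hypothesis. The non-triviality at each step is not delicate and does \emph{not} require the irreducibility of $\overline \rho_{\mathfrak m}$. In the exact sequence (\ref{eq-sec0}), the target $H^0(X^{\geq h}_{I,\bar s_v},j^{\geq h}_! j^{\geq h,*} V_{\xi,\overline \Zm_l}[d-h])_{\mathfrak m}$ is free, so any torsion element of $H^{0}(X^{\geq h+1}_{I,\bar s_v},V_{\xi,\overline \Zm_l}[d-h-1])_{\mathfrak m}$ automatically lies in the kernel of the middle map, hence in the image of the injection from $H^{-1}(X^{\geq h}_{I,\bar s_v},V_{\xi,\overline \Zm_l}[d-h])_{\mathfrak m}$; this forces the torsion submodules of these two groups to coincide. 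For the remaining steps $h<h_0-1$, the isomorphisms (\ref{eq-sec}) transport the torsion verbatim. There is thus no cancellation to rule out and no need to ``thread Verdier duality through the induction'' for the non-triviality assertion: duality is used only, as the paper indicates, to handle the degrees $i\geq 0$ at the base step. The irreducibility hypothesis on $\overline \rho_{\mathfrak m}$ enters the paper only later, in \S\ref{para-proof}, in the comparison of global lattices; it plays no role whatsoever in this proposition, which is a purely structural statement recalled from \cite{boyer-imj}.
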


\rem Note that any system of Hecke eigenvalues
$\mathfrak m$ of $\Tm^S_\xi$ inside the torsion of some 
$H^i(\sh_{I,\bar \eta_v},V_{\xi,\overline \Zm_l})$ raises in characteristic zero, i.e. 
is associated to a minimal prime ideal $\widetilde{\mathfrak m}$ of $\Tm^{S \cup \{ v \}}_{\xi}$. 
More precisely, using the remark following the proposition \ref{prop-temperee-explicite},
there exists $\widetilde{\mathfrak m} \subset \mathfrak m$ such that
the local component at $v$ of $\pi_{\widetilde{\mathfrak m}}$ is isomorphic to
$\st_{h_0(I)+1}(\chi_v) \times \chi_{v,1} \times \cdots \times \chi_{v,d-h_0(I)-1}$
where $\chi_v, \chi_{v,1},\cdots , \chi_{v,d-h_0(I)-1}$ are characters of $F_v^\times$.

\subsection{Harris-Taylor perverse sheaves over $\overline \Zm_l$}
\label{para-HT}

Consider now the ideals $I^v(n):=I^vK_v(n)$ where
$K_v(n):=\ker(GL_d(\OC_v) \twoheadrightarrow GL_d(\OC_v/\MC_v^n))$.
Recall then that $\sh_{I^v(n),\bar s_v}^{=h}$ is geometrically induced
under the action of the parabolic subgroup $P_{h,d}(\OC_v/\MC_v^n)$, defined as the
stabilizer of the first $h$ vectors of the canonical basis of $F_v^d$. Concretely this means there
exists a closed subscheme $\sh_{I^v(n),\bar s_v,\overline{1_{h}}}^{=h}$ stabilized by the Hecke 
action of $P_{h,d}(F_v)$ and such that
$$\sh_{I^v(n),\bar s_v}^{=h} = \sh_{I^v(n),\bar s_v,\overline{1_{h}}}^{=h} 
\times_{P_{h,d}(\OC_v/\MC_v^n)} GL_d(\OC_v/\MC_v^n),$$
meaning that $\sh_{I^v(n),\bar s_v}^{=h} $ is the disjoint union of copies of
$\sh_{I^v(n),\bar s_v,\overline{1_{h}}}^{=h}$ indexed by 
$GL_d(\OC_v/\MC_v^n)/P_{h,d}(\OC_v/\MC_v^n)$ and 
exchanged by the action of
$GL_d(\OC_v/\MC_v^n)$.

\begin{nota} \label{nota-strata}
For any $g \in GL_d(\OC_v/\MC_v^n)/P_{h,d}(\OC_v/\MC_v^n)$, we denote by
$\sh^{=h}_{I^v(n),\bar s_v,g}$ the \emph{pure} Newton stratum defined as the image of
$\sh_{I^v(n),\bar s_v,\overline{1_{h}}}^{=h}$ by $g$.
Its closure in $\sh_{I^v(n),\bar s_v}$ is then denoted by
$\sh^{\geq h}_{I^v(n),\bar s_v,g}$.
\end{nota}

Let then denote by  $\mathfrak m^v$ the multiset of
Hecke eigenvalues given by $\mathfrak m$ but outside $v$ and 
introduce for $\Pi_h$ any representation of $GL_h(F_v)$
$$H^i(\sh_{I^v(\oo),\bar s_v,\overline{1_{h}}}^{\geq h}, V_{\xi,\overline \Zm_l})_{\mathfrak m^v} \otimes \Pi_h:=
\lim_{\atop{\longrightarrow}{n}} H^i(\sh_{I^v(n),\bar s_v,\overline{1_{h}}}^{\geq h},
V_{\xi,\overline \Zm_l})_{\mathfrak m^v} \otimes \Pi_h,$$
as a representation of $GL_h(F_v) \times GL_{d-h}(F_v)$, where $g \in GL_h(F_v)$
acts on $\Pi_h$ as well as on $H^i(\sh_{I^v(n),\bar s_v,\overline{1_{h}}}^{\geq h},
V_{\xi,\overline \Zm_l})_{\mathfrak m^v}$ through the determinant map $\det: GL_h(F_v)
\twoheadrightarrow F_v^\times$. Note moreover that the unipotent radical of
$P_{h,d}(F_v)$ acts trivially on these cohomology groups. 
We then introduce their induced version
$$H^i(\sh^{\geq h}_{I^v(\oo),\bar s_v},\Pi_h \otimes V_{\xi,\overline \Zm_l})_{\mathfrak m^v}
\simeq \ind_{P_{h,d}(F_v)}^{GL_d(F_v)} H^i(\sh_{I^v(\oo),\bar s_v,\overline{1_h}}^{\geq h}, V_{\xi,\overline \Zm_l})_{\mathfrak m^v} \otimes \Pi_h.$$

More generally, with the notations of \cite{boyer-invent2}, replace now the trivial representation by
 an irreducible cuspidal representation $\pi_v$ of $GL_g(F_v)$ for some $1 \leq g \leq d$.
 
 \begin{notas} 
Let $1 \leq t \leq s:=\lfloor d/g \rfloor$ and $\Pi_t$ any representation of 
 $GL_{d-tg}(F_v)$. We then denote by
$$\widetilde{HT}_1(\pi_v,\Pi_t):=\LC(\pi_v[t]_D)_{\overline{1_{tg}}} 
\otimes \Pi_t \otimes \Xi^{\frac{tg-d}{2}}$$ 
the Harris-Taylor local system on the Newton stratum $\sh^{=tg}_{I,\bar s_v,\overline{1_{tg}}}$ where 
\begin{itemize}
\item $\LC(\pi_v[t]_D)_{\overline{1_{tg}}}$ is defined thanks to
Igusa varieties attached to the representation $\pi_v[t]_D$ of the division algebra of dimension
$(tg)^2$ over $F_v$ associated to $\st_t(\pi_v)$ by the Jacquet-Langlands correspondence,

\item $\Xi:\frac{1}{2} \Zm \longrightarrow \overline \Zm_l^\times$ defined by $\Xi(\frac{1}{2})=q^{1/2}$.
\end{itemize}
We also introduce the induced version
$$\widetilde{HT}(\pi_v,\Pi_t):=\Bigl ( \LC(\pi_v[t]_D)_{\overline{1_{tg}}} 
\otimes \Pi_t \otimes \Xi^{\frac{tg-d}{2}} \Bigr) \times_{P_{tg,d}(F_v)} GL_d(F_v),$$
where the unipotent radical of $P_{tg,d}(F_v)$ acts trivially and the action of
$$(g^{\oo,v},\left ( \begin{array}{cc} g_v^c & * \\ 0 & g_v^{et} \end{array} \right ),\sigma_v) 
\in G(\Am^{\oo,v}) \times P_{tg,d}(F_v) \times W_v$$ 
is given
\begin{itemize}
\item by the action of $g_v^c$ on $\Pi_t$ and 
$\deg(\sigma_v) \in \Zm$ on $ \Xi^{\frac{tg-d}{2}}$, and

\item the action of $(g^{\oo,v},g_v^{et},\val(\det g_v^c)-\deg \sigma_v)
\in G(\Am^{\oo,v}) \times GL_{d-tg}(F_v) \times \Zm$ on $\LC_{\overline \Qm_l}
(\pi_v[t]_D)_{\overline{1_h}} \otimes \Xi^{\frac{tg-d}{2}}$.
\end{itemize}
We also introduce
$$HT(\pi_v,\Pi_t)_{\overline{1_{tg}}}:=\widetilde{HT}(\pi_v,\Pi_t)_{\overline{1_{tg}}}[d-tg],$$
and the perverse sheaf
$$P(t,\pi_v)_{\overline{1_{tg}}}:=j^{=tg}_{1,!*} HT(\pi_v,\st_t(\pi_v))_{\overline{1_{tg}}} 
\otimes \Lm(\pi_v),$$
and their induced version, $HT(\pi_v,\Pi_t)$ and $P(t,\pi_v)$, where 
$$j^{=h}=i^h \circ j^{\geq h}:\sh^{=h}_{I,\bar s_v} \hookrightarrow
\sh^{\geq h}_{I,\bar s_v} \hookrightarrow \sh_{I,\bar s_v}$$ 
and $\Lm^\vee$, the dual of $\Lm$, is the local Langlands correspondence.
Finally we will also use the indice $\xi$ in the notations, for example $HT_\xi(\pi_v,\Pi_t)$, 
when we twist the sheaf with $V_{\xi,\overline \Zm_l}$.
\end{notas}

With the previous notations, from (\ref{eq-red-tau}), we deduce the following equality in the
Grothendieck group of Hecke-equivariant local systems
\addtocounter{thm}{1}
\begin{equation} \label{eq-chgt-cuspi}
m(\varrho)l^{u}\Bigl [ \Fm \LC_{\xi,\overline \Zm_l}(\pi_{v,u}[t]_D) \Bigr ] =  
\Bigl [ \Fm \LC_{\xi,\overline \Zm_l}(\pi_{v,-1}[tm(\varrho)l^u]_D) \Bigr ].
\end{equation}

We want now to focus on the perverse Harris-Taylor sheaves. Note first that over 
$\overline \Zm_l$, there are two notions of intermediate extension associated to 
the two classical $t$-structures $p$ and $p+$.
So for every $\pi_v \in  \cusp_{\varrho}$ of $GL_g(F_v)$ and $1 \leq t \leq d/g$, we can define:
\addtocounter{thm}{1}
\begin{equation} \label{eq-bimorphism}
\lexp p j^{=tg}_{!*} HT(\pi_{v,},\Pi_t) \htarrow_+ \lexp {p+} j^{=tg}_{!*} HT(\pi_{v},\Pi_t),
\end{equation}
the symbol $\htarrow_+$ meaning bimorphism, i.e. both a monomorphism and epimorphism, so that
the cokernel for the $t$-structure $p$ (resp. the kernel for $p+$) has support in
$\sh^{\geq tg+1}_{I,\bar s_v}$. When $\pi_v$ is a character, i.e. when $g=1$, the associated
bimorphims are isomorphisms, as explained in the following lemma, 
but in general there are not.

\begin{lemma} \label{lem-ext0} 
With the previous notations, we have an isomorphism
$$\lexp p j^{\geq h}_{\overline{1_h},!*}  HT(\chi_v,\Pi_h) 
\simeq \lexp {p+} j^{\geq h}_{\overline{1_h},!*} HT(\chi_v,\Pi_h).$$
\end{lemma}

\begin{proof}
Recall that $\sh^{\geq h}_{I,\bar s_v,\overline{1_h}}$ is smooth over 
$\spec  \overline \Fm_p$. As, up to a modification of the action of the fondamental
group through the character $\chi_v$, we have
$$HT(\chi_v,\Pi_h)_{\overline{1_h}}[h-d]= (\overline \Zm_l)_{|\sh^{\geq h}_{I,\bar s_v,\overline{1_h}}} \otimes \Pi_h.$$
Then $HT(\chi_v,\Pi_h)_{\overline{1_h}}$ is perverse for the two $t$-structures with
$$i^{h \leq +1,*}_{\overline{1_h}} HT(\chi_v,\Pi_h)_{\overline{1_h}} \in 
\lexp p \DC^{< 0} \hbox{ and }
i^{h \leq +1,!}_{\overline{1_h}} HT(\chi_v,\Pi_h)_{\overline{1_h}} \in 
\lexp {p+} \DC^{\geq 1}.$$
\end{proof}

\rem One of the main result of \cite{boyer-duke}, is the fact that
the previous lemma holds for any $\pi_v \in \cusp_\varrho(-1)$.

As explained in the introduction, with the hypothesis on the order of $q_v$ modulo $l$
which is supposed to be strictly greater than $d$, for $\varrho$ the trivial representation, 
we do not need to bother about the representations 
$\pi_v \in \cusp_\varrho(u)$ for $u \geq 0$,
 cf. the remark after \ref{nota-mvarrho}.

\subsection{Filtrations of the nearby cycles perverse sheaf}
\label{para-fil-psi}

 Let denote by\footnote{We decide not to add $I$ in the list of indexes.} 
$$\Psi_{v}:=R\Psi_{\eta_v}(\overline \Zm_l[d-1])(\frac{d-1}{2})$$
the nearby cycles autodual free perverse sheaf on the geometric special fiber $\sh_{I,\bar s_v}$
of $\sh_I$. We also denote by $\Psi_{\xi,v}:=\Psi_{v} \otimes V_{\xi,\overline \Zm_l}$.

Using the Newton stratification and following the constructions of \cite{boyer-FT},
we can define a $\overline \Zm_l$-filtration $\Fill^\bullet(\Psi_v)$ whose graded parts are free, 
isomorphic to some free perverse Harris-Taylor sheaf. Moreover,
by denoting $\scusp_{\overline \Fm_l}(g)$ for the set of inertial equivalence 
classes of irreducible $\overline \Fm_l$-supercuspidal representations of 
$GL_g(F_v)$,  in \cite{boyer-duke} 
proposition 3.1.3, we proved the following splitting
\addtocounter{thm}{1}
\begin{equation} \label{eq-psi-dec}
\Psi_{v} \simeq \bigoplus_{g=1}^d 
\bigoplus_{\varrho \in \scusp_{\overline \Fm_l}(g)} \Psi_{\varrho},
\end{equation}
with the property that
the irreducible sub-quotients of 
$$\Psi_{\varrho} \otimes_{\overline \Zm_l} \overline \Qm_l \simeq
\bigoplus_{\pi_v \in \cusp_\varrho} \Psi_{\pi_v}$$
are exactly the perverse Harris-Taylor sheaves, of level $I$, 
associated to an irreducible cuspidal $\overline \Qm_l$-representation
of some $GL_g(F_v)$ such that the supercuspidal support of the modulo $l$ reduction of
$\pi_v$ is a segment associated to the inertial class $\varrho$.

\rem In \cite{boyer-duke}, we proved that if you always use the adjunction maps $j^{=h}_! j^{=h,*}
\rightarrow \Id$ then all the previous graded parts of $\Psi_{\varrho}$ 
are isomorphic to $p$-intermediate extensions. In the following we will only consider the 
case where $\varrho$ is a character in which case, cf. lemma \ref{lem-ext0}, the $p$ and $p+$ intermediate extensions 
associated to character $\chi_{v,-1} \in \cusp_\varrho(-1)$, coincide. 
Note that in the following
we will not use the results of \cite{boyer-duke}.

\medskip

Denoting by $\grr^k(\Psi_{\xi,v}):=\Fill^k(\Psi_{\xi,v})/\Fill^{k-1}(\Psi_{\xi,v})$,
we then have a spectral sequence
\addtocounter{thm}{1}
\begin{equation} \label{eq-ss1}
E_1^{p,q}=H^{p+q}(\sh_{I,\bar s_v},\grr^{-p}(\Psi_{\xi,v})) \Rightarrow 
H^{p+q}(\sh_{I,\bar \eta_v},V_{\xi,\overline \Zm_l}),
\end{equation}
where we recall that
\addtocounter{thm}{1}
\begin{multline} \label{eq-grppsi}
\lexp p j^{=tg}_{!*} HT_\xi(\pi_v,\st_t(\pi_v))(\frac{1-t+2i}{2}) \htarrow_+  
\grr^k(\Psi_{\xi,v}) \htarrow_+ \\
\lexp {p+} j^{=tg}_{!*} HT_\xi(\pi_v,\st_t(\pi_v))(\frac{1-t+2i}{2}),
\end{multline}
for some irreducible cuspidal representation $\pi_v$ of $GL_g(F_v)$ with $1 \leq t \leq d/g$ and
$0 \leq i \leq \lfloor d/g \rfloor -1$.
%

Let consider now the filtration of stratification of $\Psi_{\xi,\varrho}$ constructed using the
adjunction morphisms $j^{=t}_! j^{=t,*}$ as in \cite{boyer-torsion}
$$\Fil^0_!(\Psi_{\xi,\varrho}) \harrow \Fil^1_!(\Psi_{\xi,\varrho}) \harrow 
\Fil^{2}_!(\Psi_{\xi,\varrho})
\harrow \cdots \harrow \Fil^{d}_!(\Psi_{\xi,\varrho})$$
where the symbol $\harrow$ means a monomorphism such that the cokernel
is torsion free which here means that
$\Fil^{t}_!(\Psi_{\xi,\varrho})$ is the saturated image of 
$j^{=t}_!j^{=t,*} \Psi_{\xi,\varrho} \longrightarrow \Psi_{\xi,\varrho}$.  
We then denote by $\gr^k_!(\Psi_{\xi,\varrho})$ the graded parts and
\addtocounter{thm}{1}
\begin{equation} \label{eq-ss2}
E_{!,\varrho,1}^{p,q}=H^{p+q}(\sh_{I,\bar s_v},\gr^{-p}_!(\Psi_{\xi,\varrho})) \Rightarrow 
H^{p+q}(\sh_{I,\bar \eta_v},\Psi_{\xi,\varrho}).
\end{equation}

\rem Over $\overline \Qm_l$, in \cite{boyer-torsion} we prove that
$\Fil^k_!(\Psi_{\xi,\varrho}) \otimes_{\overline \Zm_l} \overline \Qm_l$
is $\ker N^k$ where $N$ is the monodromy operator at $v$. Moreover
there is only one filtration of stratification of $\gr^k_!(\Psi_{\xi,\varrho})$
which we denote simply by $\Fil^\bullet (\gr^k_!(\Psi_{\xi,\varrho}))$ in
the following. Over $\overline \Qm_l$, 
$\Fil^\bullet (\gr^k_!(\Psi_{\xi,\pi_v}))$
coincides with the filtration through the iterated image of $N$, i.e.
$\gr^r(\gr^k_!(\Psi_{\xi,\pi_v}))=\im N^r \cap \ker N^k$ so that
we recover the usual bi-filtration of monodromy of \cite{boyer-invent2}.

\section{Irreducibility implies freeness}
\label{para-proof}

Recall that we argue by absurdity assuming there exists non trivial 
cohomology classes in some of the 
$H^i(\sh_{I,\bar \eta},V_{\xi,\overline \Zm_l})_{\mathfrak m}$.
The strategy is then to choose a place $v \in \spl_I$ such that the order of $q_v$ modulo
$l$ is strictly greater than $d$, and to compute the middle cohomology group
 $H^{d-1}(\sh_{I,\bar \eta},V_{\xi,\overline \Zm_l})_{\mathfrak m}$ through the spectral
 sequence of vanishing cycles. This spectral sequence gives us in particular a filtration of the
 free quotient of $H^{d-1}(\sh_{I,\bar \eta},V_{\xi,\overline \Zm_l})_{\mathfrak m}$ 
We will in fact consider different sorts of level $I$ relatively to the place $v$ and another
one denoted $w$ verifying the same hypothesis as $v$:
\begin{itemize}
\item either with infinite level at $v$ or with $I_v$ some particular Iwahori subgroup;

\item either maximal or infinite at $w$.
\end{itemize}

\rem From now on, the localization at $\mathfrak m$, means that we prescribe
the modulo $l$ Satake's parameters as usual, but outside $\{ v,w \}$.

By hypothesis, there exists non trivial torsion cohomology classes in level $I$
with $I_v$ and $I_w$ maximal. As moreover we supposed the order of both $q_v$ and
$q_w$ is strictly greater than $d$, then the functors of invariants by any open compact
subgroups either at $v$ or $w$, is exact. Thus this allows us to argue similarly with
all the mentioned level, cf. lemma \ref{lem-iIv}.
Let now explain the main steps of the following sections.

(a) Following the arguments of the previous section, 
we first analyse the torsion cohomology
classes of Harris-Taylor perverse sheaves with infinite level at $v$, and we deduce, 
cf. lemma \ref{lem-rem-after1}, that, as $\overline \Fm_l$-representations
of $GL_d(F_v)$, 
irreducible sub-quotients of the $l$-torsion
of their cohomology in infinite level at $v$, with highest non degeneracy level, appears in degree $0,1$.

(b) In \S \ref{para-generic}, considering always infinite level at $v$, 
we analyse the torsion
cohomology classes of the graded parts $\gr^t_!(\Psi_\varrho)$ of the filtration of
stratification constructed using the adjunction property $j^{=t}_! j^{=t,*} \rightarrow \Id$.
We then deduce, cf. lemma \ref{lem-generic-sq}, that the $l$-torsion of 
$H^i(\sh_{I^v(\oo),\bar s_v},V_{\xi,\overline \Zm_l})_{\mathfrak m}$ 
does not have, as a $\overline \Fm_l$-representation of $GL_d(F_v)$,
any irreducible generic subquotient whose supercuspidal support is made of
characters.

(c) In section \ref{para-modification}, we obtain two fondamental results.
\begin{itemize}
\item First, cf. lemma \ref{lem-important},  
under the hypothesis that there exists non trivial torsion cohomology classes, we show that
the graded pieces $\Gamma_k$ of the filtration of the free quotient of
$H^0(\sh_{I^v(\oo),\bar s_v},V_{\xi,\overline \Zm_l})_{\mathfrak m}$
are not always given by the lattice of
$H^0(\sh_{I,\bar s_v},P_\xi(t,\chi_v)(\frac{1-t+2\delta}{2}))_{\mathfrak m} 
\otimes_{\overline \Zm_l} \overline \Qm_l$ given by the integral cohomology of
$P_\xi(t,\chi_v)$. Roughly there exists some $k$ and a short exact
sequence $\Gamma_0 \hookrightarrow \Gamma_k \twoheadrightarrow T$ where $\Gamma_0$
is the lattice given by the integral cohomology of the associated Harris-Taylor perverse sheaf, 
and $T$ is non trivial and torsion.

\item We then play with the action of $GL_d(F_w)$ by allowing infinite level at $w$. The main
observation at the end of the section, cf. proposition \ref{prop-important},
is that as a $\overline \Fm_l$-representation
of $GL_d(F_w)$, all the irreducible sub-quotients of the $l$-torsion of $T$, up to multiplicities,
are also sub-quotients of the $l$-torsion of the global cohomology. In particular,
as $v$ and $w$ are playing symmetric roles, these sub-quotients are not generic,
cf. corollary \ref{coro-absurd}.
\end{itemize}
 
(d) In \S \ref{para-final}, the last step is to prove, under the absurd hypothesis that
there exists non trivial torsion cohomology classes while
$\overline \rho_{\mathfrak m}$ being irreducible, then $S_{\mathfrak m}(v)$
contains a full set $\{ \lambda q_v^n: n \in \Zm \}$ which is of order the order
of $q_v$ modulo $l$. As this order is supposed to be strictly greater than $d$,
this is absurd. For more insight on the strategy to prove this fact using the previous
properties about lattices, we refer to
the introduction of \S \ref{para-final}.

\subsection{Torsion classes for Harris-Taylor perverse sheaves.}

We focus on the torsion in the cohomology groups of the Harris-Taylor perverse
sheaves $P_\xi(\chi_v,t)$ when the level at $v$  
is infinite. 

\begin{nota} \label{nota-mvw}
We will denote by $I^{v}(\oo) \in \IC$ a finite\footnote{and morally infinite at $v$} 
level outside $v$, and we also denote by
$\mathfrak m$ the maximal ideal of $\Tm^{S \cup \{ v \} }_{\xi}$ 
associated to $\mathfrak m$, i.e. we do not prescribe the modulo $l$ Satake's at $v$.
Let also denote
$$H^i(\sh_{I^v(\oo),\bar s_v},\overline \Zm_l)_{\mathfrak m}:= 
\lim_{\atop{\rightarrow}{I_v}} 
H^i(\sh_{I,\bar s_v},\overline \Zm_l)_{\mathfrak m},$$
which can be viewed as a $\overline \Zm_l[GL_d(F_v)]$-module.
\end{nota}

\begin{prop} We have
the following resolution of $\lexp p j_{!*}^{=t} HT(\chi_{v},\Pi_t)$
\addtocounter{thm}{1}
\begin{multline} \label{eq-resolution0}
0 \rightarrow j_!^{=d} HT(\chi_{v},\Pi_t \{ \frac{t-s}{2} \} ) \times 
\speh_{d-t}(\chi_{v}\{ t/2 \} ))
 \otimes \Xi^{\frac{s-t}{2}} \longrightarrow \cdots  \\
\longrightarrow j_!^{=t+1} HT(\chi_{v},\Pi_t \{ -1/2 \}  \times \chi_{v} \{ t/2 \} ) 
\otimes \Xi^{\frac{1}{2}} \longrightarrow \\ j_!^{=t} HT(\chi_{v},\Pi_t) 
\longrightarrow  \lexp p j_{!*}^{=t} HT(\chi_{v},\Pi_t) \rightarrow 0.
\end{multline}
\end{prop}

Note that
\begin{itemize}
\item as this resolution is equivalent to the computation of the sheaves 
cohomology groups
of $\lexp p j_{!*}^{=h} HT(\chi_v,\st_h(\chi_v)) $ as explained for example in
\cite{boyer-duke} proposition B.1.5 of appendice B, then, 
over $\overline \Qm_l$, it follows from the main results of \cite{boyer-invent2}.

\item Over $\overline \Zm_l$, as every terms are free perverse sheaves, then
all the maps are necessary strict. 

\item This resolution, for a a general supercuspidal representation
with supercuspidal modulo $l$ reduction, is one of the main result 
of \cite{boyer-duke} \S 2.3.
\end{itemize}

\begin{proof}
For the case of a character $\chi_v$ as above, the argument is almost 
obvious. Indeed as the strata
$\sh^{\geq h}_{I^v,\bar s_v,1}$ are smooth, then, cf. the proof of the lemma
\ref{lem-ext0}, the constant sheaf, up to shift, is perverse and
so equals to the intermediate extension of the constant sheaf, shifted by $d-h$, 
on $\sh^{=h}_{I^v,\bar s_v,1}$. In particular its sheaves cohomology groups
are well known so that the resolution is completely obvious for 
$\lexp p j_{\overline{1_h},!*}^{=h} HT_{\overline{1_h}}(\chi_v,\st_h(\chi_v))$
if one remember that $\speh_i(\chi_v)$ is just the character $\chi_v \circ \det$
of $GL_i(F_v)$.

The stated resolution is then simply the induced version of the resolution
of $\lexp p j^{=h}_{\overline{1_h},!*} HT_{\overline{1_h}}(\chi_v,\st_h(\chi_v))$:
recall that a direct sum of intermediate extensions is still an intermediate 
extension.
%
\end{proof}

By the adjunction property, the map
\addtocounter{thm}{1}
\begin{multline} \label{eq-map1}
j_!^{=t+\delta} HT(\chi_{v},\Pi_t \{ \frac{-\delta}{2} \} ) \times \speh_{\delta}
(\chi_{v}\{ t/2 \} )) \otimes \Xi^{\delta/2} \\
\longrightarrow j_!^{=t+\delta-1} HT(\chi_{v},\Pi_t \{ \frac{1-\delta}{2} \} ) \times 
\speh_{\delta-1}(\chi_{v}\{ t/2 \} )) \otimes \Xi^{\frac{\delta-1}{2}}
\end{multline}
is given by 
\addtocounter{thm}{1}
\begin{multline}\label{eq-map2}
HT(\chi_{v},\Pi_t \{ \frac{-\delta}{2} \}  \times \speh_{\delta}(\chi_{v}\{ t/2 \} )) 
\otimes \Xi^{\delta/2} \longrightarrow \\
\lexp p i^{t+\delta,!}  j_!^{=t+\delta-1} HT(\chi_{v},\Pi_t \{ \frac{1-\delta}{2} \} ) \times 
\speh_{\delta-1}(\chi_{v}\{ t/2 \} )) \otimes \Xi^{\frac{\delta-1}{2}}
\end{multline}
 We have then
\addtocounter{thm}{1}
\begin{multline} \label{eq-map3}
\lexp p i^{t+\delta,!}  j_!^{=t+\delta-1} HT(\chi_{v},\Pi_t \{ \frac{1-\delta}{2} \} ) \times 
\speh_{\delta-1}(\chi_{v}\{ t/2 \} )) \otimes \Xi^{\frac{\delta-1}{2}} \\ \simeq 
HT \Bigl ( \chi_{v}, \Pi_t \{ \frac{1-\delta}{2} \} )  \times \bigl ( \speh_{\delta-1}(\chi_{v} \{ -1/2 \})
 \times \chi_{v} \{\frac{\delta-1}{2} \}  \bigr ) \{ t/2 \}  \Bigr ) \otimes \Xi^{\delta/2}.
\end{multline}
Indeed one can compute $\lexp p i^{h+1,!} j_{!}^{=h} HT(\chi_v,\Pi_h)$ through
the spectral sequence associated to the exhaustive filtration of stratification of
$j_{!}^{=h} HT(\chi_v,\Pi_h)$ 
\addtocounter{thm}{1}
\begin{multline} \label{eq-filj}
(0)=\Fil^0(\chi_{v} ,h) \hookrightarrow \Fil^{-d}(\chi_{v} ,h) \hookrightarrow \cdots
\\
\hookrightarrow \Fil^{-h}(\chi_{v} ,h)=j^{=h}_{!} HT(\chi_{v} ,\Pi_h)
\end{multline}
with graded parts, using \ref{lem-ext0} and \cite{boyer-torsion},
$$\gr^{-k}(\chi_{v},h) \simeq \lexp p j^{=k}_{!*} 
HT(\chi_{v},\Pi_h \{\frac{h-k}{2} \} \otimes \st_{k-h}(\chi_{v}\{h/2 \} ))(\frac{h-k}{2}).$$ 
As remarked before the sheaf cohomology groups of 
$$i^{h+1,*} \lexp p j_{!*}^{h+k}  HT(\chi_v,\Pi_h \{ -k/2 \} \times \st_k(\chi_v)(h/2))$$
are torsion free, so, by the Grothendieck-Verdier duality, the same is true for
$$i^{h+1,!} \lexp p j_{!*}^{h+k}  HT(\chi_v,\Pi_h \{ -k/2 \}\times \st_k(\chi_v)(h/2)).$$
The statement follows then from the fact that, over
$\overline \Qm_l$, the previous spectral sequence degenerates at $E_1$.

\rem This property is also true when we replace the character $\chi_v$ by
any irreducible cuspidal representation $\pi_v$, cf. \cite{boyer-duke}.

\noindent \emph{Fact}.
In particular, up to homothety, the map (\ref{eq-map3}), and so those of (\ref{eq-map2}), is 
unique. Finally as the map of (\ref{eq-resolution0}) are strict, the given maps (\ref{eq-map1}) 
are uniquely determined, that is if we forget the infinitesimal parts, these maps are independent 
of the chosen $t$ in (\ref{eq-resolution0}).

We want now to copy the arguments of \S \ref{para-max}.

\begin{nota} \label{nota-ih}
For every $1 \leq h \leq d$, let denote by $i_{I^v}(h)$ the smaller index $i$ such that 
$H^i(\sh_{I^{v}(\oo),\bar s_v},\lexp p j^{=h}_{!*} HT_\xi(\chi_{v} ,\Pi_h ))_{\mathfrak m}$
has non trivial torsion: if it doesn't exists then set $i_{I^v}(h)=+\oo$. 
\end{nota}

\rem By duality, as 
$\lexp p j_{!*}^{=h}=\lexp {p+} j^{=h}_{!*}$ for Harris-Taylor local systems associated to a
character, note that when $i_I(h)$ is finite then $i_{I^v}(h) \leq 0$. 
Note also that, using the classical determinant map, it is in fact independent of the character
$\chi_v$. However we do not need this fact in the following.

\begin{nota} \label{nota-h0}
Suppose there exists $I \in \IC$ such that
there exists $1 \leq h \leq d$ with $i_{I^v}(h)$ finite and denote by $h_0(I^v)$ 
the bigger such $h$.
\end{nota}

\begin{lemma} \label{lem-torsion-min}
For $1 \leq h \leq h_0(I^v)$ then $i_{I^v}(h)=h-h_0(I^v)$. Moreover
$\frob_v$ acts by $\chi_v(\frob_v) q_v^{\frac{h_0(I^v)+1-h}{2}}$.
\end{lemma}

\begin{proof}
Note first that for every $h_0(I^v) \leq h \leq s$, then the cohomology groups of 
$j^{=h}_! HT_\xi(\chi_{v},\Pi_h)$ are torsion free. 
The $\xi$-associated spectral sequence associated to the filtration
(\ref{eq-filj}), localized at $\mathfrak m$, is then concentrated in 
middle degree and torsion free.

Consider then the spectral sequence associated to the resolution (\ref{eq-resolution0}): its 
$E_1$ terms are torsion free and it degenerates at $E_2$. 
As by hypothesis the aims of this spectral sequence is free
and equals to only one $E_2$ terms, we deduce that all the maps
\addtocounter{thm}{1}
\begin{multline} \label{eq-map1-coho0}
H^0 \bigl (\sh_{I^{v}(\oo),\bar s_v},j_!^{=h+\delta} HT_\xi(\chi_{v},\Pi_h \{ \frac{-\delta}{2} \} ) \times 
\speh_{\delta} (\chi_{v}\{ t/2 \} )) \otimes \Xi^{\delta/2} \bigr )_{\mathfrak m} 
\longrightarrow \\ H^0 \bigl (\sh_{I^{v}(\oo),\bar s_v},
j_!^{=h+\delta-1} HT_\xi(\chi_{v},\Pi_h \{ \frac{1-\delta}{2} \} ) \\ \times 
\speh_{\delta-1}(\chi_{v}\{ t/2 \} )) \otimes \Xi^{\frac{\delta-1}{2}} \bigr )_{\mathfrak m}
\end{multline}
are strict. Then from the previous fact stressed after (\ref{eq-map3}), this property
remains true when we consider the associated spectral sequence for $1 \leq h' \leq h_0$.

Consider now  $h=h_0(I^v)$ 
where we know the torsion to be non trivial. From what was observed above
we then deduce that the map
\addtocounter{thm}{1}
\begin{multline} \label{eq-map1-coho}
H^0 \bigl (\sh_{I^{v}(\oo),\bar s_v},j_!^{=h_0(I^v)+1} HT_\xi(\chi_{v},\Pi_{h_0(I^v)} \{ \frac{-1}{2} \} ) 
\times \chi_{v}\{ h_0(I^v)/2 \} )) \otimes \Xi^{1/2} \bigr )_{\mathfrak m} \\
\longrightarrow  H^0 \bigl (\sh_{I^{v}(\oo),\bar s_v},
j_!^{=h_0(I^v)} HT_\xi(\chi_{v},\Pi_{h_0(I^v)}) \bigr )_{\mathfrak m}
\end{multline}
has a non trivial torsion cokernel so that $i_I(h_0(I^v))=0$.

Finally for any $1 \leq h \leq h_0$, the map like (\ref{eq-map1-coho}) for $h+\delta-1 < h_0$
are strict so that the $H^i(\sh_{I^{v}(\oo),\bar s_v},\lexp p j^{=h}_{!*} 
HT_\xi(\chi_{v},\Pi_h))_{\mathfrak m}$ are zero for $i < h-h_0$ while when 
$h+\delta-1=h_0$ its cokernel has non trivial torsion which gives
then a non trivial torsion class in $H^{h-h_0}(\sh_{I^{v}(\oo),\bar s_v},\lexp p j^{=h}_{!*} 
HT_\xi(\chi_{v},\Pi_h))_{\mathfrak m}$.
\end{proof}

\begin{lemma} \label{lem-iIv}
With the notation of \ref{prop-h0I}, we have $h_0(I^v) \geq h_0(I)$.
\end{lemma}

\begin{proof}
Consider the previous map (\ref{eq-map1-coho}) by replacing $h_0(I^v)$ by $h_0(I)$.
As by hypothesis the order of $q_v$ modulo $l$ is strictly greater than $d$, 
then the 
pro-order of the local component $I_v$ of $I$ at $v$,
is invertible modulo $l$, so that the functor of invariants under $I_v$ is exact.
Note then that, as the $I_v$-invariants of the map (\ref{eq-map1-coho}) when
replacing $h_0(I^v)$ by $h_0(I)$, has a cokernel which is not free, then the cokernel
of (\ref{eq-map1-coho}), for $h_0(I)$, is also not free.
%
%
\end{proof}

\rem The main reason to go to infinite level at $v$ is to be able to use
the notion of level of non degeneracy which will be convenient but we will
have to deal with the possibility that $h_0(I^v)$ might be strictly greater
than $h_0(I)$ so that appear extra torsion classes with no interest for us.

From the previous proof, we also deduce that all cohomology classes of any of 
the $H^i(\sh_{I^{v}(\oo),\bar s_v},P_\xi(t,\chi_v))_{\mathfrak m}$ comes 
from the non strictness of some of
the map (\ref{eq-map1-coho}) where $\Pi_v:=\st_t(\chi_v)$. In the following we will focus
on $H^i(\sh_{I^{v}(\oo),\bar s_v},P_\xi(t,\chi_v))_{\mathfrak m}[l]$ as a 
$\overline \Fm_l$-representation of $GL_d(F_v)$. 
More precisely we are interested in irreducible such sub-quotients which have
maximal non-degeneracy level at $v$.

\begin{nota}
Let first fix such non degeneracy level $\underline{\lambda}$ for $GL_d(F_v)$
in the sense of notation \ref{nota-nondegeneracy}, which is maximal for torsion classes
in $H^0(\sh_{I^{v}(\oo),\bar s_v},P_\xi(t,\chi_v))_{\mathfrak m}[l]$ for various $1 \leq t \leq d$ and $\chi_v \in \cusp_\varrho(-1)$.
\end{nota}

\rem As mentioned after \ref{nota-ih}, for the definition of $\lambda$, you could also
consider any fixed $\chi_v \in \cusp_\varrho(-1)$. 

\begin{lemma} \label{lem-rem-after1}
Let $\varrho$ be a $\overline \Fm_l$-character of $F_v^\times$ and $\chi_v \in \cusp_{-1}(\varrho$.  
Then all $\overline \Fm_l[GL_d(F_v)]$-irreducible sub-quotients of
$H^i(\sh_{I^{v}(\oo),\bar s_v},P_\xi(t,\pi_v))_{\mathfrak m}[l]$, for $i\neq 0,1$, have a level of 
non degeneracy strictly less than $\underline{\lambda}$.
\end{lemma}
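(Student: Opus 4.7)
The plan is to combine two ingredients: the Jordan--Hölder decomposition of Proposition \ref{prop-dec-pervers}, which will reduce the analysis to the case where the cuspidal datum is the character $\varrho$ itself, and the resolution (\ref{eq-resolution0}) together with its associated hypercohomology spectral sequence, which will control the $\overline\Fm_l[GL_d(F_v)]$-structure of the localized cohomology. First I will invoke Proposition \ref{prop-dec-pervers} to write every Jordan--Hölder constituent of $\Fm(\lexp p j^{=tg_{u}(\varrho)}_{!*}HT(\pi_v,\st_t(\pi_v)))$ in the form $\lexp p j^{=h}_{!*}HT(\varrho,\Pi_h)$ with $h=tg_u(\varrho)+rg_{-1}(\varrho)$ and $\Pi_h$ an explicit representation involving $V_\varrho(r,<\underline{\delta_u})$. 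It then suffices to bound the non-degeneracy level of the $\overline\Fm_l$-irreducible sub-quotients of $H^i(X_{I^{v,w},\bar s_v},\lexp p j^{=h}_{!*}HT_\xi(\varrho,\Pi_h))_{\mathfrak m^{v,w}}[l]$ for each such $(h,\Pi_h)$.

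Next I will apply (\ref{eq-resolution0}) with $\chi_v=\varrho$ and $t$ replaced by $h$, whose $\delta$-th term is
$$T_\delta:=j^{=h+\delta}_!HT_\xi\bigl(\varrho,\Pi_h\{-\delta/2\}\times\speh_\delta(\varrho\{h/2\})\bigr)\otimes\Xi^{\delta/2},$$
giving the hypercohomology spectral sequence
$$E_1^{-\delta,q}:=H^q(X_{I^{v,w},\bar s_v},T_\delta)_{\mathfrak m^{v,w}}[l]\Longrightarrow H^{-\delta+q}(X_{I^{v,w},\bar s_v},\lexp p j^{=h}_{!*}HT_\xi(\varrho,\Pi_h))_{\mathfrak m^{v,w}}[l].$$
The $GL_d(F_v)$-action on $H^*(T_\delta)$ factors through induction from $P_{h+\delta,d-h-\delta}(F_v)$ of a representation whose $GL_{h+\delta}(F_v)$-factor contains $\speh_\delta(\varrho\{h/2\})$ as a tensor component. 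Since $\speh_\delta$ has the most degenerate non-degeneracy partition $(1,\ldots,1)$ for $\delta\geq 1$, monotonicity of the non-degeneracy partition under parabolic induction and under taking sub-quotients forces every $\overline\Fm_l$-irreducible sub-quotient of $E_1^{-\delta,q}$ to have non-degeneracy level strictly below $\underline\lambda$ as soon as $\delta\geq 2$; only the $\delta\in\{0,1\}$ columns can possibly contribute constituents of partition $\underline\lambda$.

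The third step is a position analysis. By the proof of Lemma \ref{lem-torsion-min}, torsion in the $E_1$-terms is controlled by non-strictness of the maps (\ref{eq-map1-coho}), and the $\delta\in\{0,1\}$ columns contribute to $H^i$ only for $i\in\{0,1\}$ via the $E_1$-differential $d_1\colon E_1^{-1,0}\to E_1^{0,0}$ identified there. Thus for $i\neq 0,1$ any surviving $E_\infty$-piece in $H^i$ either lies in a $\delta\geq 2$ column, in which case the previous step concludes, or arises from a higher-page differential $d_r$ ($r\geq 2$) linking a $\delta\in\{0,1\}$ column to a $\delta\geq 2$ column.

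The hard part will be this last possibility: one must exclude that a $d_r$-differential with $r\geq 2$ transports a $\underline\lambda$-non-degenerate constituent between the two regimes and thereby produces a surviving $\underline\lambda$-non-degenerate piece in $H^i$ for $i\neq 0,1$. The critical input should be the uniqueness up to homothety of the adjunction maps established after (\ref{eq-map3}), which is expected to force every differential of the resolution's spectral sequence to respect the filtration by non-degeneracy furnished by Proposition \ref{prop-dec-pervers}. Once this filtration-compatibility is in place, no higher-page differential can create a $\underline\lambda$-non-degenerate constituent outside $i\in\{0,1\}$, and the lemma will follow.
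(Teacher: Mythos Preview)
Your use of Proposition \ref{prop-dec-pervers} (and its following remark) to pass from an arbitrary $\pi_v\in\cusp_\varrho$ to the character $\varrho$ matches the paper exactly; that is the correct final step. The problems lie in your treatment of the character case itself.

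First, a technical point: you write the spectral sequence with $E_1^{-\delta,q}=H^q(T_\delta)_{\mathfrak m^{v,w}}[l]$, but $[l]$ is not exact, so this is not a spectral sequence. More seriously, by the proof of Lemma \ref{lem-torsion-min} the \emph{integral} terms $H^q(T_\delta)_{\mathfrak m^{v,w}}$ are torsion free and concentrated in $q=0$; all the torsion is produced only at the $E_2$-page, from non-strictness of the $d_1$-maps (\ref{eq-map1-coho}). So in step 3 you are bounding the non-degeneracy level of the wrong object. Related to this, in step 4 the map $d_1:E_1^{-1,0}\to E_1^{0,0}$ feeds degrees $-1$ and $0$, not $0$ and $1$; degree $1$ enters only via Verdier duality of the self-dual sheaf $\PC(t,\chi_v)$.

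The deeper gap is your central inequality in step 3. You assert that the presence of a $\speh_\delta$ factor with $\delta\geq 2$ forces every subquotient of the column to have non-degeneracy level strictly below $\underline\lambda$. But $\underline\lambda$ is \emph{defined} as the maximal level occurring in $H^0[l]$ for $\PC_\xi(t,\chi_v)$ as $t$ ranges over $1,\dots,d$; it is not given by any explicit partition, so you have nothing to compare the partition of $\Pi_h\times\speh_\delta$ against. Your step 5 then tries to repair this via compatibility of higher differentials with a non-degeneracy filtration, but that compatibility is neither established nor needed.

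The paper's argument for the character case is one line ("follows from above") because it uses exactly the output of Lemma \ref{lem-torsion-min} and the Fact after (\ref{eq-map3}): every torsion class in every $H^i(\PC_\xi(t,\chi_v))_{\mathfrak m^{v,w}}$ arises from non-strictness of one of the maps (\ref{eq-map1-coho}), and these maps are, up to the infinitesimal factor $\Pi_t=\st_t(\chi_v)$, literally independent of $t$. Thus the irreducible $\overline\Fm_l[GL_d(F_v)]$-subquotients of $H^i[l]$ for varying $(i,t)$ differ only through the induced factor $\st_t(\chi_v)\times\speh_{\bullet}$ they carry; the definition of $\underline\lambda$ as the maximum over $H^0$ for all $t$ (together with duality for $i=1$) then gives the statement directly, with no higher-page analysis required. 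The correct comparison is across different values of $t$, not across columns $\delta$ for a fixed $t$.
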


\rem Recall that we only know, a priori, that the $P(t,\pi_v)$ only verify
$$\lexp p j^{=tg}_{!*} HT(\pi_v,\st_t(\pi_v)) \htarrow_+ P(t,\pi_v) \htarrow_+
\lexp {p+} j^{=tg}_{!*} HT(\pi_v,\st_t(\pi_v)).$$

\begin{proof}
It easily follows from the observation that the level of non degeneracy of 
the modulo $l$ reduction of $\speh_h(\chi_v) \simeq \chi_v$ is strictly less than
those of the modulo $l$ reduction of $\st_h(\chi_v)$ which is irreducible as the order
of $q_v$ modulo $l$ is strictly greater than $d \geq h$.
\end{proof}

\rem The general case where $\pi_v \in \cusp_u(\varrho)$ for $u \geq 0$
is also true. One first have to deal with
$\lexp p j^{=t}_{!*} HT_{\xi,\overline \Fm_l}(\varrho,\Pi_t)[d-t]$, i.e.
if $H^i(\sh_{I^{v}(\oo),\bar s_v},\lexp p j^{=t}_{!*} HT_{\xi,\overline \Fm_l}(\varrho,\Pi_t)
[d-t])_{\mathfrak m}$
and then use the equality of proposition 2.4.2 of \cite{boyer-duke}.

%

%
%
%
%
%
%
%
%
%


\subsection{Global torsion and genericity}
\label{para-generic}

Recall that $v \in \spl$ is such that the order of $q_v$ modulo $l$ is strictly
greater than $d$. Let denote by $I^{v}$ the component of $I$ outside $v$. 
We then simply denote by $\Psi_v$ and $\Psi_{v,\xi}$, 
the inductive system of perverse sheaves indexed by the finite level $I^{v} I_v  \in \IC$
for varying $I_v$.

%
%
%

For $\pi_{v} \in \cusp_\varrho$, let denote by
$$\Fil^1_!(\Psi_\varrho) \twoheadrightarrow \Fil^1_{!,\pi_{v}}(\Psi_\varrho)$$ 
such that
$\Fil^1_{!,\pi_{v}}(\Psi_v) \otimes_{\overline \Zm_l} \overline \Qm_l \simeq \Fil^1_!(\Psi_{\pi_{v}})$
where $\Psi_{\pi_{v}}$ is the direct factor of $\Psi_v \otimes_{\overline \Zm_l} \overline \Qm_l$
associated to $\pi_{v}$, cf. \cite{boyer-torsion}. 

\rem In the following, we will mainly be concerned with the case where $\pi_v$
is a character $\chi_v$. We will then write the main statement in this case.

Recall the following resolution of $\Fil^1_{!,\chi_{v}}(\Psi_\varrho)$
\addtocounter{thm}{1}
\begin{multline} \label{eq-resolution-psi0}
0 \rightarrow j^{=d}_! HT(\chi_{v},\speh_d(\chi_{v})) \otimes \Lm (\chi_{v}(\frac{d-1}{2}))
\longrightarrow \\
 j^{=d-1}_!HT(\chi_{v},\speh_{d-1}(\chi_{v})) \otimes \Lm (\chi_{v}(\frac{d-2}{2})) 
\longrightarrow \\ \cdots \longrightarrow j^{=1}_! HT(\chi_{v},\chi_{v}) \otimes \Lm(\chi_{v})
\longrightarrow \Fil^1_{!,\chi_{v}}(\Psi_\varrho) \rightarrow 0
\end{multline}
which is proved in \cite{boyer-torsion} over $\overline \Qm_l$. I claim it is
also true over $\overline \Zm_l$. Indeed, using \ref{lem-ext0},
it is equivalent to the fact the sheaf cohomology of $\Fil^1_{!,\chi_{v}}(\Psi_\varrho)$
are torsion free which follows then from \cite{s-s}, the comparison 
theorem of Faltings-Fargues cf. \cite{fargues-faltings} and the main theorem
of \cite{fargues-monodromie}.
 
\rem In \cite{boyer-duke}, we prove the same resolution for any irreducible
cuspidal representation $\pi_v$ in place of $\chi_v$.

As in notation \ref{nota-mvw}, let
$\mathfrak m$ be the maximal ideal of $\Tm^{S \cup \{ v \} }_{\xi}$ 
associated to $\mathfrak m$.
We can then apply the arguments of the previous section so that 
$H^i(\sh_{I^{v}(\oo),\bar s_v},\Fil^1_{!,\chi_{v}}(\Psi_{\varrho,\xi}))_{\mathfrak m}$ 
has non trivial torsion for $i=1-t_0$
and with free quotient zero for $i \neq 0$. Clearly we can also repeat the same 
arguments for the other $\gr^t_!(\Psi_\varrho) \twoheadrightarrow 
\gr^t_{!,\chi_v}(\Psi_\varrho)$ with
\addtocounter{thm}{1}
\begin{multline} \label{eq-resolution-psi}
0 \rightarrow j^{=d}_! HT(\chi_{v},LT_{\chi_v}(t-1,d-t)) \otimes \Lm(\chi_{v}(\frac{d-2t+1}{2}))
\longrightarrow \\
 j^{=d-1}_!HT(\chi_{v},LT_{\chi_{v}}(t-1,d-t-1)) \otimes \Lm(\chi_{v}(\frac{d-2t}{2})) 
\longrightarrow \\ \cdots \longrightarrow j^{=t}_! HT(\chi_{v},\st_t(\chi_{v})) \otimes \Lm(\chi_{v})
\longrightarrow \gr^t_{!,\chi_{v}}(\Psi_\varrho) \rightarrow 0.
\end{multline}
Finally all the torsion cohomology classes of the 
$H^i(\sh_{I^v(\oo),\bar s_v},\gr^t_{!,\chi_v}(\Psi_\varrho))_{\mathfrak m}$ 
come from the non strictness of the maps
\addtocounter{thm}{1}
\begin{equation} \label{eq-map-hmax}
H^{0}(\sh_{I^v(\oo),\bar s_v},j^{=h+1}_! HT(\chi_v,\Pi_{h+1}))_{\mathfrak m} \longrightarrow
H^0(\sh_{I^v(\oo),\bar s_v},j^{=h}_! HT(\chi_v,\Pi_h))_{\mathfrak m}
\end{equation}
where $(\Pi_h,\Pi_{h+1})$ is of the shape $\Bigl ( LT_{\chi_v}(t-1,h-t),LT_{\chi_v}(t-1,h+1-t) \Bigr )$.

We can then copy the proof of lemma \ref{lem-torsion-min} which gives us the following
statement.

\begin{lemma}
For every $1 \leq h \leq h_0$, the number 
$i_I(h)=h-h_0$ of notation \ref{nota-ih}, is also the lowest integer $i$ so that the torsion
of $H^i(\sh_{I^{v}(\oo),\bar s_v},\gr^h_{!,\chi_{v}}(\Psi_{\varrho,\xi}) )_{\mathfrak m}$ is non zero.
\end{lemma}

\begin{lemma} \label{lem-generic-sq}
As a $\overline \Fm_l[GL_d(F_v)]$-module, for every $i$, the $l$-torsion of
$H^i(\sh_{I^v(\oo),\bar s_v},V_{\xi,\overline \Zm_l})_{\mathfrak m}$,
does not have an irreducible generic sub-quotient
whose cuspidal support is made of characters.
\end{lemma}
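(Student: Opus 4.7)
The plan is to trace any torsion class in $H^i(X_{I,\bar s_v},\overline{\Zm}_l)_{\mathfrak m^v}$ through the two spectral sequences (\ref{eq-ss1}) and (\ref{eq-ss2}), and then, inside each graded piece $\gr^t_{!,\chi_v}$, through the resolution (\ref{eq-resolution-psi}), down to non-strictness of one of the differentials (\ref{eq-map-hmax}). The decisive observation will be that the cokernels producing the torsion are, as $GL_d(F_v)$-representations, sub-quotients of parabolic inductions whose inducing data contains a Speh factor of length at least one, ruling out generic irreducible sub-quotients.

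First I would use the splitting $\Psi=\bigoplus_\varrho \Psi_\varrho$ to fix a supercuspidal inertial class $\varrho$, and a character lift $\chi_v\in\cusp_\varrho(-1)$, reducing the question to the analysis of each graded piece $\gr^t_{!,\chi_v}\hookrightarrow\gr^t_!(\Psi_\varrho)$. The resolution (\ref{eq-resolution-psi}) provides a spectral sequence whose $E_1$-terms are cohomology groups of $j^{=h}_!HT(\chi_v,LT_{\chi_v}(t-1,h-t))$, and by lemma \ref{lem-torsion-min} together with the lemma immediately preceding the present one, every torsion class in the abutment descends to a sub-quotient of the cokernel of a map (\ref{eq-map-hmax}), realised inside $H^0(X_{I,\bar s_v},j^{=h}_!HT(\chi_v,LT_{\chi_v}(t-1,h-t)))_{\mathfrak m^v}$ for some $h$.

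The crucial step is to read off the $GL_d(F_v)$-structure of such a cokernel: it embeds in a parabolic induction whose inducing datum is, by definition \ref{defi-rep}, a constituent of $\st_t(\chi_v\{-r/2\})\times\speh_r(\chi_v\{t/2\})$ with $r=h-t\geq 1$. The presence of the Speh factor $\speh_r(\chi_v\{t/2\})$ with $r\geq 1$ strictly caps the level of non-degeneracy of the induction below $d$, so no irreducible generic $\overline{\Fm}_l[GL_d(F_v)]$-sub-quotient can appear at $E_1$. Since later differentials in the spectral sequences (\ref{eq-ss1}), (\ref{eq-ss2}) only produce further sub-quotients, and since the level-of-non-degeneracy bound is inherited by sub-quotients, no generic sub-quotient arises in the abutment either. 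Summing over all $\varrho$, over all $\chi_v\in\cusp_\varrho(-1)$, and over all $t$, then yields the lemma.

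The main obstacle I anticipate is the boundary case $h=t$, in which $LT_{\chi_v}(t-1,0)=\st_t(\chi_v)$ is itself generic: one must check that even the last arrow $H^{-1}(j^{=t+1}_!HT(\chi_v,LT_{\chi_v}(t-1,1)))\to H^0(j^{=t}_!HT(\chi_v,\st_t(\chi_v)))$ of (\ref{eq-resolution-psi}), should it be non-strict, cannot produce a generic torsion class in its cokernel. For this I would exploit that the source involves $LT_{\chi_v}(t-1,1)$, whose Speh datum, even of length one, already places it in $\cusp$-blocks distinct from the generic Steinberg-type constituents of the target, so that the generic part of the target lifts integrally and no generic class survives in the torsion. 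A secondary issue is to verify that passage to modulo $l$ reduction of the $LT_{\chi_v}(t-1,r)$ does not introduce unexpected generic Jordan--Hölder constituents, which I would control using proposition \ref{prop-dec-pervers}.
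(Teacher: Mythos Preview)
Your overall strategy matches the paper's: reduce via the spectral sequence (\ref{eq-ss2}) to the graduates $\gr^t_{!,\chi_v}$, trace their torsion to non-strictness of the maps (\ref{eq-map-hmax}), and then bound the level of non-degeneracy through the representations $LT_{\chi_v}(t-1,r)$. The gap is that you read the torsion off the wrong side of (\ref{eq-map-hmax}), and this is exactly what produces your ``boundary case''.

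You place the torsion cokernel inside the \emph{target}, which carries the representation $LT_{\chi_v}(t-1,h-t)$ with $r=h-t\geq 0$; the case $r=0$ (i.e.\ $h=t$) is then genuinely generic and must be handled separately. The paper instead uses $r>0$ throughout, which is the value attached to the \emph{source}: for a map $f:A\to B$ of free $\overline\Zm_l$-modules one has $\coker(f)[l]\hookrightarrow \im(f)\otimes\overline\Fm_l$, so the $l$-torsion of the cokernel is a subquotient of $A\otimes\overline\Fm_l$. Applied to (\ref{eq-map-hmax}), the torsion is controlled by the modulo $l$ reduction of $LT_{\chi_v}(t-1,h+1-t)$, where $h+1-t\geq 1$ in every case. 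The proof is then the single sentence in the paper: for every $r>0$ the modulo $l$ reduction of $LT_{\chi_v}(t-1,r)$ has no irreducible generic subquotient. There is no boundary case to treat.

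Your proposed patch for $h=t$ does not work as written: $LT_{\chi_v}(t-1,1)$ and $\st_t(\chi_v)$ have supercuspidal supports built from the same character $\chi_v$, so no separation by ``cusp-blocks'' is available. A second omission: you never invoke the irreducibility of $\overline\rho_{\mathfrak m}$, but the paper uses it to guarantee that the $\overline\Qm_l$-version of (\ref{eq-ss2}) degenerates at $E_1$, so that every torsion class in the abutment is already visible in some $E_1$-term rather than being created by a later differential; without this the reduction to (\ref{eq-map-hmax}) is incomplete.
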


\rem Note that when the order of $q_v$ modulo $l$ is strictly greater than $d$, then
there is no difference between cuspidal or supercuspidal support made of characters.

\begin{proof}
Recall first that, as by hypothesis $\overline{\rho_{\mathfrak m}}$ is irreducible, the
$\overline \Qm_l$-version of the spectral sequence (\ref{eq-ss2}) degenerates at $E_1$ so
that in particular all the torsion cohomology classes appear in the $E_1$ terms. 
As we are only interested in representations with cuspidal support made of characters,
we only have to deal with the perverse sheaves $P(t,\chi_v)$ so that the result
follows from the previous maps (\ref{eq-map-hmax}) and the fact that for
any $r>0$, the modulo $l$ reduction of $LT_{\chi_v}(t-1,r)$ does not admit any irreducible
generic sub-quotient.
\end{proof}

\rem We could also prove the same result without restriction on the cuspidal support but
then we would have to deal with the problem mentioned in the remark after lemma
\ref{lem-rem-after1} which is the main subject of \cite{boyer-duke}.

\subsection{Torsion and modified lattices}
\label{para-modification}

Recall that we argue by absurdity, assuming there exists $I_0 \in \IC$
unramified at the place $v$, such that the torsion of some of
the $H^i(\sh_{I_0,\bar s_v},V_{\xi,\overline \Zm_l})_{\mathfrak m}$ is non
zero. We then denote $h_0:=h_0(I_0)$, cf. proposition \ref{prop-h0I}.

We want to study the free quotient of 
$H^0(\sh_{I,\bar s_v},V_{\xi,\overline \Zm_l})_{\mathfrak m}$, for some particular
level $I$, through the spectral sequence of vanishing cycles. We first focus 
on the cohomology of $\gr^{h_0}_{!,\chi_v}(\Psi_{\varrho,\xi})$ for a character
$\chi_v \in \cusp_{-1}(\varrho)$ which is by definition a quotient of
$\gr^{h_0}_{!}(\Psi_{\varrho,\xi})$.
To do so, consider first the filtration constructed in \cite{boyer-torsion}
$$\Fil^{d-h_0} ( \gr^{h_0}_{!,\chi_v}(\Psi_{\varrho,\xi})) \subset
\cdots \subset \Fil^0(\gr^{h_0}_{!,\chi_v}(\Psi_{\varrho,\xi})),$$
with successive free graded parts
$\gr^i(\gr^{h_0}_{!,\chi_v}(\Psi_{\varrho}))$ which, for $i \geq 0$, 
is a $\overline \Zm_l$-structure of
the $\overline \Qm_l$-perverse sheaf $P(h_0+i,\chi_v)  (\frac{1-h_0+i}{2})$.

For any finite level $I \in \IC$,
we then now introduce two $\overline \Zm_l$-lattices of
\addtocounter{thm}{1}
\begin{equation} \label{eq-def-sq}
H^0(\sh_{I,\bar s_v},P_\xi(h_0+i,\chi_v))_{\mathfrak m} 
\otimes_{\overline \Zm_l} \overline \Qm_l.
\end{equation}
\begin{itemize}
\item The first one denoted by $\Gamma_{\xi,\chi_v,\mathfrak m}(I,h_0+i)$ is given by the free integral cohomology: recall that as the order of $q_v$ modulo $l$
is supposed to be strictly greater than $d$ then the modulo $l$ reduction of
$\st_{h_0+i}(\chi_v)$ and that of $\chi_v[t]_D$, 
remains irreducible so that, up to homothety, there is an unique
stable lattice of $P_\xi(h_0+i,\chi_v)$.

\item The spectral sequence associated to the previous filtration of 
$\gr^{h_0}_{!,\chi_v}(\Psi_{\xi,\varrho})$, provides a filtration of $H^0_{free}
(\sh_{I,\bar s_v},\gr^{h_0}_{!,\chi_v}(\Psi_{\xi,\varrho}))_{\mathfrak m}$
and $\Gamma_{\xi,\chi_v,!,\mathfrak m}(I,h_0+i,h_0)$ is then the lattice of the
sub-quotient in this filtration corresponding to (\ref{eq-def-sq}). 
\end{itemize}

By construction we have $\Gamma_{\xi,\chi_v,\mathfrak m}(I,h_0+1) 
\hookrightarrow \Gamma_{\xi,\chi_v,!,\mathfrak m}(I,h_0+1,h_0)$ 
but the cokernel of torsion might be non trivial due to torsion
in the remaining of the $E_\oo$ terms.

\begin{nota} \label{nota-iwh0}
Let denote by
\begin{multline*} 
\Iw_v(h_0):= 
\bigl \{ g \in GL_d(\OC_v) \hbox{ such that } \\ (g \mod \varpi_v) \in
P_{1,2,\cdots,h_0,d}(\kappa(v)); \bigr \}.
\end{multline*}
\end{nota}

\rem For $h \geq h_0+1$, then $LT_{\chi_v}(h,d-h-1)$ does not have non trivial
vector invariant by $\Iw_v(h_0)$. Moreover for $\pi_v$ an irreducible
representation of $GL_{d-h}(F_v)$ with $h \geq h_0+1$, 
then $LT_{\chi_v}(h_0-1,h-h_0) \times \pi_v$
admits non trivial invariants vectors by $\Iw_v(h_0)$ if and only if
$\pi_v$ is unramified.

\begin{lemma} \label{lem-important} 
With the previous notations, there exists a finite level $I \in \IC$ with
$I_v \simeq \Iw_v(h_0)$, and a short exact sequence
$$0 \rightarrow \Gamma_{\xi,\chi_v,\mathfrak m}(I,h_0+1) 
\longrightarrow \Gamma_{\xi,\chi_v,!,\mathfrak m}
(I,h_0+1,h_0) \longrightarrow   T \rightarrow 0$$
where $T \neq (0)$ and every irreducible sub-quotient of its $l$-torsion as a 
$\Tm_{\xi,\mathfrak m}^S \otimes_{\overline \Zm_l} \overline \Fm_l$-module, 
can be obtained as a sub-quotient of the torsion submodule of 
the cokernel of 
\addtocounter{thm}{1}
\begin{multline} \label{eq-torsion-trop2}
H^{0}(\sh_{I,\bar s_v},j^{=h_0+1}_! 
HT_\xi(\chi_v,\st_{h_0+1}(\chi_v)))_{\mathfrak m} \\
\longrightarrow H^0(\sh_{I,\bar s_v},j^{=h_0}_! 
HT_\xi(\chi_v,\st_{h_0}(\chi_v)))_{\mathfrak m}.
\end{multline}
\end{lemma}

\begin{proof}
The idea is to compute the cohomology of $\gr^{h_0}_{!}(\Psi_{\xi,\chi_v})$
in two different ways, first through the spectral sequence associated to
(\ref{eq-resolution-psi}) and secondly through its filtration of stratification with graded parts the Harris-Taylor perverse sheaves.

To argue we will rest on the level of non degeneracy at $v$ so that we pass to
$I^v(\oo)$-level: as $q_v$ modulo $l$ is of order $>d$ taking
invariant under $GL_d(\OC_v)$ is then an exact functor. 
First note that the $I^v(\oo)$-version of 
(\ref{eq-torsion-trop2}) is non strict 
if and only if the same is true for its non induced version in the next formula,
whatever is $\Pi_{h_0}$ a representation of $GL_{h_0}(F_v)$
\addtocounter{thm}{1}
\begin{multline} \label{eq-torsion-trop3}
H^{0}(\sh_{I^{v}(\oo),\bar s_v,\overline{1_{h_0}}},j^{=h_0+1}_{\overline{1_{h_0}},!} HT_{\overline{1_{h_0}},\xi}
(\chi_v,\Pi_{h_0}  \otimes \chi_v ))_{\mathfrak m} \\
\longrightarrow H^0(\sh_{I^{v}(\oo),\bar s_v,\overline{1_{h_0}}},j^{=h_0}_{\overline{1_{h_0}},!} 
HT_{\overline{1_{h_0}},\xi}(\chi_v,\Pi_{h_0}))_{\mathfrak m},
\end{multline}
where we denote by $\sh^{=h_0+1}_{I^v(\oo),\bar s_v,\overline{1_{h_0}}}$ the
disjoint union of the pure strata, cf. notation \ref{nota-strata}, 
$\sh^{=h_0+1}_{I^v(\oo),\bar s_v,g}$
contained in $\sh^{\geq h_0}_{I^v(\oo),\bar s_v,\overline{1_{h_0}}}$. As usual
the notation $j^{=h_0+1}_{\overline{1_{h_0}}}$ designates the closed embedding
of $\sh^{=h_0+1}_{I^v(\oo),\bar s_v,\overline{1_{h_0}}}$ in $\sh^{\geq 1}_{I^v(\oo),\bar s_v}$.
Recall that $h_0$ in \S \ref{para-max}, is chosen so that, as a $\overline \Zm_l[P_{h_0,d}(F_v)]$-module, for $\Pi_{h_0}$ unramified, using also 
the fact that $q_v$  modulo $l$ is of order $>d$ so that the functor of $P_{h_0,d}(\OC_v)$-invariants is exact, then the cokernel of (\ref{eq-torsion-trop3})
has non trivial vectors invariant under $P_{h_0,d}(\OC_v)$. We then deduce
the following facts, cf. also the remark after \ref{nota-iwh0}.

\begin{lemma} \label{lem-facts0}
\begin{itemize}
\item With $\Pi_{h_0}=\st_{h_0}(\chi_v)$, the cokernel of the induced version of
(\ref{eq-torsion-trop3}) has non trivial vectors invariant 
under $\Iw_v(h_0)$.

\item For $h > h_0$, whatever are the representations $\Pi_h$ and
$\Pi_{h+1}$ of respectively $GL_h(F_v)$ and $GL_{h+1}(F_v)$, the cokernel of
\addtocounter{thm}{1}
\begin{multline} \label{eq-torsion-trop-h}
H^{0}(\sh_{I^v(\oo),\bar s_v},j^{=h+1}_! 
HT_\xi(\chi_v,\Pi_{h+1} ))_{\mathfrak m} \\
\longrightarrow H^0(\sh_{I^v(\oo),\bar s_v},j^{=h}_! 
HT_\xi(\chi_v,\Pi_h))_{\mathfrak m},
\end{multline}
does not have non zero invariant vector under $GL_d(\OC_v)$.

\item For $\Pi_h=LT_{\chi_v}(h_0-1,h-h_0)$ and 
$\Pi_{h+1}=LT_{\chi_v}(h_0-1,h-h_0+1)$, as in (\ref{eq-map-hmax}),
the cokernel of the previous point does not have non zero invariant vector under
$\Iw_v(h_0)$.
\end{itemize}
\end{lemma}

We then compute the $\mathfrak m$-localized cohomology of 
$\gr^{h_0}_{!,\chi_v}(\Psi_{\varrho,\xi})$
in level $I^v(\oo)$ having non trivial invariant under $I$ with 
$I_v \simeq \Iw_v(h_0)$. 
By maximality of $h_0$, note that for $h_0<t \leq d$, the cohomology groups of
$P_\xi(t,\chi_v)$ and $j^{=t}_! HT_\xi(\pi_v,LT_{\chi_v}(h_0-1,t-h_0))$,
after localization by $\mathfrak m$, do no
have non trivial torsion vector invariant under $\Iw_v(h_0)$ as explained in
the previous lemma.

(1) Following the proof of \ref{lem-torsion-min} with the spectral sequence 
associated to 
(\ref{eq-resolution-psi}) and neglecting torsion classes
which do not have non trivial vectors invariants by $\Iw_v(h_0)$, 
we then deduce that the
$H^i_{tor}(\sh_{I^v(\oo),\bar s_v},
\gr^{h_0}_{!}(\Psi_{\xi,\chi_v}))_{\mathfrak m}$ do not have
non trivial vector invariant under $\Iw_v(h_0)$ if $i \neq 0,1$
while for $i=0$ the torsion is non trivial and the
vectors invariant by $\Iw_v(h_0)$ are given by the non strictness of 
\addtocounter{thm}{1}
\begin{multline} \label{eq-torsion-trop}
H^{0}(\sh_{I^{v}(\oo),\bar s_v},j^{=h_0+1}_! HT_\xi(\chi_v,LT_{\chi_v}(h_0-1,1)))_{\mathfrak m} \\
\longrightarrow H^0(\sh_{I^{v}(\oo),\bar s_v},j^{=h_0}_! 
HT_\xi(\chi_v,\st_{h_0}(\chi_v)))_{\mathfrak m}.
\end{multline}

(2) Concerning $H^0(\sh_{I^{v}(\oo),\bar s_v},
P_\xi(\chi_v,h_0))_{\mathfrak m}$, 
its torsion submodule is
parabolicaly induced, so that beside those coming from the non strictness of (\ref{eq-torsion-trop}),
there is also the contribution given by the non strictness of (\ref{eq-torsion-trop2}), which 
contains in particular a subquotient, denoted $\widetilde T$, 
such that $\widetilde T[l]$ is of level of non degeneracy strictly greater than those appearing 
in (\ref{eq-torsion-trop}). Note moreover that $\widetilde T[l]$
has non trivial vectors under $\Iw_v(h_0)$.

(3) Consider then the cohomology of $\gr^{h_0}_{!,\chi_v}(\Psi_{\varrho,\xi})$  
computed through
its filtration of stratification with graded parts, up to Galois shifts, the 
$H^0(\sh_{I^{v}(\oo),\bar s_v},P_\xi(h_0+k,\chi_v))_{\mathfrak m}$, 
for $0 \leq k \leq d-h_0$,
and more particularly the induced filtration of the free quotient of 
$H^0(\sh_{I^{v}(\oo),\bar s_v},\gr^{h_0}_{!,\chi_v}(\Psi_{\varrho,\xi}))_{\mathfrak m}$  as before.
As the level of non degeneracy of $\widetilde T[l]$ is higher than those of the $l$-torsion
of $H^0(\sh_{I^{v}(\oo),\bar s_v},\Fil^{h_0}_{!,\chi_v}(\Psi_{\varrho,\xi}))_{\mathfrak m}$, 
computed through the spectral sequence associated to (\ref{eq-resolution-psi}),
we then have a filtration of this free quotient where both appears
\begin{itemize}
\item torsion modules such as $\widetilde T$,

\item and the free sub-quotients are given by the lattices $\Gamma_{\xi,\chi_v,\mathfrak m}(I^v,h_0+i)$ 
of the free quotient of the localized cohomology of $P_\xi( \chi_v,h_0+i)$ 
for $0 \leq i \leq d-h_0$.
\end{itemize}
We know go back to the level $I=I^v\Iw_v(h_0)$: as $q_v$ modulo $l$ is of order strictly
greater than $d$, the functor of $\Iw_v(h_0)$-invariants is exact. As only contributes the
cohomology of $P_\xi( \chi_v,h_0+i)$ for $i=0,1$ the result follows from
the fact that $\widetilde T$ has non trivial invariant under $\Iw_v(h_0)$.
\end{proof}

Recall that 
$$\gr^{h_0}_!(\Psi_\varrho) \otimes_{\overline \Zm_l} \overline \Qm_l \simeq 
\bigoplus_{\chi_v \in \cusp_\varrho} \gr^{h_0}_{!}(\Psi_{\chi_v})$$
so that we can find a filtration of $\gr^{h_0}_!(\Psi_\varrho)$ whose graded parts are
free and isomorphic, after tensoring with $\overline \Qm_l$, to 
$\gr^{h_0}_{!}(\Psi_{\chi_v})$.
Arguing as in the proof of lemma \ref{lem-torsion-min}, using (\ref{eq-map-hmax}), 
we have the following result.

\begin{lemma} \label{lem-important2}
For every $1 \leq t$, let $j(t)$ be the minimal integer $j$ such that the torsion of
$H^j(\sh_{I^{v}(\oo),\bar s_v},\gr^t_{!}(\Psi_{\xi,\varrho}))_{\mathfrak m}$ has 
non trivial invariant vectors under $\Iw_v(h_0)$. Then
$$j(t)=\left \{ \begin{array}{ll}
+ \oo & \hbox{if } t \geq h_0+1, \\
t-h_0 & \hbox{for } 1 \leq t \leq h_0.
\end{array} \right.$$
Moreover as a $\Tm_{\xi,\mathfrak m} \otimes_{\overline \Zm_l} \overline \Fm_l$-module,
up to multiplicities, the irreducible sub-quotients of 
$H_{tor}^{j(t)}(\sh_{I^{v}\Iw_v(h_0),\bar s_v},\gr^t_{!}(\Psi_{\xi,\varrho}))_{\mathfrak m}$ are
independent of $t$.
\end{lemma}

\marque \noindent \textit{Important fact}:
Note that, up to multiplicities, the irreducible $\Tm_{\xi,\mathfrak m}
\otimes_{\overline \Zm_l} \overline \Fm_l$-sub-quotients\footnote{i.e. if one forget
the action of $GL_d(F_v)$}
of the $l$-torsion of the cohomology of $\gr^{h_0}_{!,\chi_v}(\Psi_{\varrho,\xi})$ and
$P_\xi(h_0,\chi_v)$ in level $I^v\Iw_v(h_0)$ 
are the same, given by the non strictness of the 
maps (\ref{eq-map-hmax}).

The idea is now to increase the level at another place $w \in \spl(I)$ verifying the same
hypothesis than $v$, i.e. $q_w$ modulo $l$ is of order strictly greater than $d$.
When the level at $w$ is infinite, i.e. for $I^w(\oo)$ with $I_v \simeq \Iw_v(h_0)$, using
again the exactness of invariant with $GL_d(\OC_w)$, from the previous observation we
deduce that considering irreducible $\overline \Fm_l$-representations of
$GL_d(F_w)$, the sets, i.e. without multiplicities, of irreducible sub-quotients of the $l$-torsion
of the cohomology of respectively $P_\xi(h_0,\chi_v)$ and 
$\gr^{h_0}_{!,\chi_{v}}(\Psi_{\varrho,\xi})$, are the same.

\begin{prop} \label{prop-important}
Up to multiplicities, the set of irreducible
$\overline \Fm_l[GL_d(F_w)]$-sub-quotients of the $l$-torsion of\footnote{or
those of $H^{0}(\sh_{I^w(\oo),\bar s_v},P_\xi(h_0,\chi_v))_{\mathfrak m}$
as explained above}
$H^{0}(\sh_{I^w(\oo),\bar s_v},\gr^{h_0}_{!}(\Psi_{\xi,\varrho}))_{\mathfrak m}$,
are the same as those of
$H^{d-h_0}(\sh_{I^w(\oo),\bar s_v},V_{\xi,\overline \Zm_l})_{\mathfrak m}$.
\end{prop}

\begin{proof}
We compute $H^{d-h_0(I^v)}(\sh_{I^w(\oo),\bar s_v},V_{\xi,\overline \Zm_l})_{\mathfrak m}$
using the filtration $\Fil^\bullet_!(\Psi_{\xi,\varrho})$ through the spectral sequence 
(\ref{eq-ss2}). Recall that for every $p+q \neq 0$, the free quotient of $E^{p,q}_{!,\varrho,1}$
are zero. By definition of the filtration these $E^{p,q}_{!,\varrho,1}$ are trivial for $p \geq 0$
while, thanks to the previous lemma, for any $p \leq -1$ there are zero for
$p+q < j(p):=p-h_0$. Note then that
$E_{!,\varrho,1}^{-1,j(1)+1}$ which is torsion and non zero, according to the previous lemma,
is equal to $E_{!,\varrho,\oo}^{j(1)} \simeq
H^{d-h_0}(\sh_{I^w(\oo),\bar s_v},V_{\xi,\overline \Zm_l})_{\mathfrak m}$.
\end{proof}

Consider as before $I^w(\oo)$ such that its local component at $v$ is 
$\Iw_v(h_0)$.
Then combining the result of lemma \ref{lem-important} in level $I^w(\oo)$, with the previous
proposition, we then deduce that the cokernel $T$ of \ref{lem-important} verifies
the following property. As a $\overline \Fm_l$-representation of $GL_d(F_w)$, every irreducible
sub-quotient of $T[l]$ is also a sub-quotient of
$H^{d-h_0}(\sh_{I^w(\oo),\bar s_v},V_{\xi,\overline \Zm_l})_{\mathfrak m}$.
Then applying lemma \ref{lem-generic-sq} at the place $w$ playing a symmetric role
as $v$, we then deduce the following result.

\begin{corol} \label{coro-absurd}
As a $\overline \Fm_l$-representation of $GL_d(F_w)$, the $l$-torsion of 
 the cokernel $T$ of lemma \ref{lem-important} in level $I^w(\oo)$ as above,
does not contain any irreducible generic sub-quotient with cuspidal support made of characters. 
\end{corol}

- We can now repeat the arguments with $\gr^k_{!,\chi_v}(\Psi_{\varrho,\xi})$ for any
$1 \leq k \leq h_0$. More precisely, cf. the last remark of \S \ref{para-fil-psi}, 
consider $\Fil^{i}(\gr^{k}_{!,\chi_c}(\Psi_{\varrho,\xi}))$
for $i=h_0-k$ and $i=h_0-k+2$. Take a level 
$$I=I^{v,w}I_w(\oo) \Iw_v(h_0),$$ 
infinite at $w$ and Iwahori at $v$,
such that, arguing by absurdity, the torsion of 
$H^{d-1}(\sh_{I^{v,w} GL_d(\OC_w \otimes \OC_v),\bar \eta},V_{\xi,\overline \Zm_l})_{\mathfrak m}$
is non trivial\footnote{For the definition of $h_0:=h_0(I^v)$, cf. notation \ref{nota-h0}.}.

There exists $\widetilde{\mathfrak m} \subset \mathfrak m$ such that
$$
\Pi_{\widetilde{\mathfrak m}} \simeq
\st_{h_0+1}(\chi_v) \times \chi_{v,1} \times \cdots \times \chi_{v,d-h_0-1},$$
with $\chi_v \equiv \varrho \mod l$. 

%
%
%
%

Moreover as before
\begin{itemize}
\item $\Fil^{h_0+2-k}(\gr^{k}_{!,\chi_v}(\Psi_{\varrho,\xi}))$
has trivial cohomology groups in level $I$ because $I_v=\Iw_v(h_0)$ and
the irreducible constituents of 
$\Fil^{h_0+2-k}(\gr^{k}_{!,\chi_v}(\Psi_{\varrho,\xi})) 
\otimes_{\overline \Zm_l} \overline \Qm_l$ are, up to Galois shift,
Harris-Taylor perverse
sheaves $P(t,\chi_v)$ with $t \geq h_0+2$;

\item we can apply the previous argument relatively to 
$\gr^{h_0}_{!,\chi_v}(\Psi_{\xi,\varrho})$ to the 
quotient $Q:=\Fil^{h_0-k}(\gr^{k}_{!,\chi_v}(\Psi_{\varrho,\xi})) 
/\Fil^{h_0+2-k}(\gr^{h_0}_{!,\chi_v}(\Psi_{\varrho,\xi}))$, so that, denoting by
$\Gamma'_{\xi,\chi_v,!,\mathfrak m} (I,h_0+1,k)$
the lattice of (\ref{eq-def-sq}) given by the free quotient of 
$H^0(\sh_{I^w(\oo),\bar s_v},Q)_{\mathfrak m}$, the cokernel $T'_k$ of
$$0 \rightarrow \Gamma_{\xi,\chi_v,\mathfrak m}(I,h_0+1) 
\longrightarrow \Gamma'_{\xi,\chi_v,!,\mathfrak m}
(I,h_0+1,k) \longrightarrow   T'_k \rightarrow 0$$
is such that $T'_k[l] \neq (0)$ and, as a $\overline \Fm_l$-representation of 
$GL_d(F_w)$, it does not contain any irreducible generic sub-quotient made of 
characters.

\item In addition of the previous arguments, we also have to deal with the
torsion in the cohomology groups of 
$$\gr^{k}_{!,\chi_v}(\Psi_{\varrho,\xi}) /\Fil^{h_0-k}(\gr^{k}_{!,\chi_v}(\Psi_{\varrho,\xi})),$$ 
which could modify the lattice $\Gamma'_{\xi,\chi_v,!,\mathfrak m}
(I,h_0+1,k)$ to give the good one denoted above by 
$\Gamma_{\xi,\chi_v,!,\mathfrak m}(I,h_0+1,k)$ . Note
again that,
as a $\overline \Fm_l$-representation of $GL_d(F_w)$, this $l$-torsion
does not contain any
irreducible generic sub-quotient made of characters, so the cokernel of 
$$\Gamma'_{\xi,\chi_v,!,\mathfrak m} (I,h_0+1,k)
\hookrightarrow \Gamma_{\xi,\chi_v,!,\mathfrak m}(I,h_0+1,k),$$
is again such that,  as a $\overline \Fm_l$-representation of 
$GL_d(F_w)$, its $l$-torsion does not contain any irreducible generic 
sub-quotient made of characters.
\end{itemize}
Forgetting again Galois shifts, we then conclude that the $l$-torsion of
the cokernel $T_k$ of
\addtocounter{thm}{1}
\begin{equation} \label{eq-absurd}
0 \rightarrow \Gamma_{\xi,\chi_v,\mathfrak m}(I,h_0+1) 
\longrightarrow \Gamma_{\xi,\chi_v,!,\mathfrak m}
(I,h_0+1,k) \longrightarrow   T_k \rightarrow 0,
\end{equation}
is non zero and,
as a $\overline \Fm_l$-representation of $GL_d(F_w)$, it
does not contain any
irreducible generic sub-quotient made of characters.

\medskip

- We now compute
$H^{d-1}(\sh_{I,\bar \eta},V_{\xi,\overline \Zm_l})_{\mathfrak m}$ through the spectral
sequence of vanishing cycles using the filtration
$$\Fil^{1}_!(\Psi_\varrho) \harrow \cdots \harrow \Fil^{d-1}_!(\Psi_\varrho) \harrow 
\Fil^{d}_!(\Psi_\varrho) \harrow \Psi_\varrho,$$
and with level $I=I^{v,w}I_w(\oo) \Iw_v(h_0)$ 
infinite at $w$ and Iwahori at $v$. Recall that we can filtrate each of the
$\gr^k_!(\Psi_\varrho)$ as follows:
\begin{itemize}
\item Define first $\Fil^\bullet(\gr^k_!(\Psi_\varrho))$ such that
$$\gr^r(\gr^k_!(\Psi_\varrho)) \otimes_{\overline \Zm_l} \overline \Qm_l
\simeq \bigoplus_{\pi_v \in \cusp_r(\varrho)} P(\pi_v,t),$$
where $t$ is such that $k=tg_r(\varrho)$.

\item Then each of the previous $\gr^r(\gr^k_!(\Psi_\varrho))$ has a filtration
with graded parts such that over $\overline \Qm_l$ we recover, up to Galois shift, 
$P(\pi_v,t)$.
\end{itemize}

\rem As we choose the level Iwahori at $v$, when computing the cohomology
we are only concerned with characters $\chi_v \in \cusp_{-1}(\varrho)$.

As before, arguing by absurdity, 
we suppose that the torsion of 
$H^{d-1}(\sh_{I^{v,w} GL_d(\OC_w \otimes \OC_v),\bar \eta},V_{\xi,\overline \Zm_l})_{\mathfrak m}$
is non trivial\footnote{For the definition of $h_0:=h_0(I^v)$, cf. notation \ref{nota-h0}.}, and we pay special attention to the lattices of 
\addtocounter{thm}{1}
\begin{equation} \label{eq-def-Vrho}
V_{\xi,\chi_v,\mathfrak m}(I,h_0+1)(\delta):=
H^i(\sh_{I,\bar s},P_\xi(h_0+1,\chi_v))(\delta)_{\mathfrak m} 
\otimes_{\overline \Zm_l} \overline \Qm_l,
\end{equation}
for $\chi_v \in \cusp_{-1}(\varrho)$ and various $\delta$.

We first start with $\delta=-h_0/2$. Note that the 
$H^i(\sh_{I,\bar s_v},\Psi_{\xi,\varrho}/\Fil^{h_0+1}_!(\Psi_{\xi,\varrho}))_{\mathfrak m}$
are all zero. Indeed the torsion free graded parts $\grr^k(\Psi_\varrho)$ 
of any exhaustive filtration of 
$\Psi_\varrho / \Fil^{h_0+1}_!(\Psi_\varrho)$, up to Galois torsion, are such that
$\grr^k(\Psi_\varrho) \otimes_{\overline \Zm_l} \overline \Qm_l \simeq P(t,\pi_v)$ 
whith $\pi_v \in \cusp_\varrho$
an irreducible cuspidal representation of some $GL_g(F_v)$ with $tg >h_0+1$. Then
every irreducible constituant of 
$H^i(\sh_{I^v(\oo),\bar s_v},\grr^k(\Psi_{\xi,\varrho}))_{\mathfrak m} 
\otimes_{\overline \Zm_l} \overline \Fm_l$,
as a $\overline \Fm_l$-representation of $GL_d(F_v)$ is a sub-quotient of an induced 
representation
$r_l(\st_t(\pi_v))\{ \delta/2 \} \times \tau$ for some irreducible $\overline \Fm_l$-representation
$\tau$ of $GL_{d-tg}(F_v)$. In particular such a representation does not have non trivial
invariants under $\Iw_v(h_0)$, so that, as the functor of $\Iw_v(h_0)$-invariants is exact,
there is no cohomology in level $I$ as stated.

We then deduce that
$H^0(\sh_{I,\bar s},P_{\xi,\overline \Qm_l}(h_0+1,\chi_v))(\delta)_{\mathfrak m}.$
is a quotient of  
$H^0(\sh_{I,\bar s_v},\Psi_{\xi,\varrho})_{\mathfrak m} 
\otimes_{\overline \Zm_l} \overline \Qm_l$
and we denote by $\Gamma_{\xi,\chi_v,\Psi,\mathfrak m}(I,h_0+1,+)$ 
its stable lattice induced by 
$H^0_{free}(\sh_{I,\bar s_v},\Psi_{\xi,\varrho})_{\mathfrak m}$.

\begin{prop} \label{prop-lattice-psi}
With the previous notations, we have
$$0 \rightarrow \Gamma_{\xi,\chi_v,\Psi,\mathfrak m}(I,h_0+1,+)
\longrightarrow \Gamma_{\xi,\chi_v,\mathfrak m}(I,h_0+1) \longrightarrow T 
\rightarrow 0,$$
where $T[l]$, as a $\overline{\mathbb F}_l$-representation of $GL_d(F_w)$,
does not have any irreducible generic sub-quotient with cuspidal support
made of characters.

\end{prop}

\begin{proof}
We compute $H^0(\sh_{I,\bar s_v},\Psi_{\xi,\varrho})_{\mathfrak m}$ through 
the spectral sequence associated to the filtration
$$\Fil^{1}_!(\Psi_{\xi,\varrho}) \harrow \cdots \harrow \Fil^{d-1}_!(\Psi_{\xi,\varrho})
\harrow  \Fil^{d}_!(\Psi_{\xi,\varrho}) \harrow \Psi_{\xi,\varrho}.$$
Recall that $H^0(\sh_{I,\bar s_v},\Psi_{\xi,\varrho})_{\mathfrak m}=
H^0(\sh_{I,\bar s_v},\Fil_!^{h_0+1}(\Psi_{\xi,\varrho})_{\mathfrak m}$.
Let denote
$$K_0 =\ker (\Fil^{h_0+1}_!(\Psi_{\xi,\varrho}) \twoheadrightarrow 
P_\xi(h_0+1,\chi_v)(-\frac{h_0}{2})).$$
Recall that over $\overline \Qm_l$, all the cohomology groups are concentrated
in degree zero. We then have
$$0 \rightarrow \Gamma_{\xi,\chi_v,\mathfrak m}(I,h_0+1,+) \longrightarrow
 \Gamma_{\xi,\chi_v,\Psi,\mathfrak m}(I,h_0+1)
\longrightarrow  T \rightarrow 0,$$
where $T \hookrightarrow H^1_{tor}(\sh_{I,\bar s_v}, K_0)_{\mathfrak m}$.
The statement about $T[l]$ then follows from the previous section.

\end{proof}

%

Consider now the case $\delta=h_0/2$ and denote as before by
$\Gamma_{\xi,\chi_v,\Psi,\mathfrak m}(I,h_0+1,-)$ the lattice of 
$V_{\xi,\chi_v,\mathfrak m}(I,h_0+1)(h_0/2)$ induced by the free quotient
of $H^0(\sh_{I,\bar s_v},\Psi_{\varrho,\xi})_{\mathfrak m}$.
Consider $\Fil^1_{!,\chi_v}(\Psi_{\varrho,\xi}) \harrow \Fil^1_!(\Psi_{\varrho,\xi})
\harrow \Psi_{\varrho,\xi}$.

\rem Before $\Fil^1_{!,\chi_v}(\Psi_{\varrho,\xi})$ was defined as a quotient of
$\Fil^1_!(\Psi_{\varrho,\xi})$. To separate the $\chi_v \in \cusp_{-1}(\varrho)$
in $\Fil^1_!(\Psi_{\varrho,\xi})$ we start with a filtration of
$j^{=1,*} \Fil^1_!(\Psi_{\varrho,\xi})$ with
$$j^{=1,*} \Fil^1_!(\Psi_{\varrho,\xi}) \bigoplus_{\chi_v \in \cusp_{-1}(\varrho)}
HT_\xi(\chi_v,\chi_v),$$
by considering an numbering of $\cusp_{-1}(\varrho)$. In particular for a fixed
$\chi_v$, when it appears in first or last position, we may to modify the lattice
of $HT(\chi_v,\chi_v)$. But as, up to homothety, $HT(\chi_v,\chi_v)$ has an unique
stable lattice, we then obtain the same $\Fil^1_{!,\chi_v}(\Psi_{\varrho,\xi})$.

We then has seen that the lattice $\Gamma_{\xi,\chi_v,!,\mathfrak m}(I,h_0+1,1)$
is such that
\addtocounter{thm}{1}
\begin{equation} \label{eq-lattice2}
0 \rightarrow \Gamma_{\xi,\chi_v,\mathfrak m}(I,h_0+1) 
\longrightarrow \Gamma_{\xi,\chi_v,!,\mathfrak m}
(I,h_0+1,1) \longrightarrow   T \rightarrow 0,
\end{equation}
where $T \neq (0)$ is such that $T[l]$, as a $\overline \Fm_l$-representation of
$GL_d(F_w)$, does not contain any
irreducible generic sub-quotient made of characters.
We then conclude by observing that
$$\Gamma_{\xi,\chi_v,\Psi,\mathfrak m}(I,h_0+1,-)=
\Gamma_{\xi,\chi_v,!,\mathfrak m}(I,h_0+1,1).$$

\rem From proposition \ref{prop-lattice-psi} and (\ref{eq-lattice2}),
as the modulo $l$ reduction of 
$\Gamma_{\xi,\chi_v,\Psi,\mathfrak m} (I,h_0+1,-)$
has a generic sub-quotient, it is not isomorphic to 
$\Gamma_{\xi,\chi_v,\Psi,\mathfrak m} (I,h_0+1,+)$.

\subsection{Global lattices and generic representations}
\label{para-final}

%

We argue on the set $S_{\mathfrak m}(v)$ of modulo $l$ eigenvalues of 
$\overline \rho_{\mathfrak m}(\frob_v)$. 
By hypothesis there exists $\lambda:=\chi_v(\varpi_v)$ such that 
$$\{ \lambda q_v^{h_0/2},\lambda q_v^{h_0/2-1},\cdots,\lambda q_v^{-h_0/2} \} 
\subset S_{\mathfrak m}(v),$$ 
where $h_0 \geq 1$ is defined before under the hypothesis there exists non
trivial torsion. More precisely $\lambda q_v^{h_0/2}$ is the modulo $l$ reduction
of the eigenvalue of $\frob_v$ acting on $P_\xi(\chi_v,h_0+1)(\frac{h_0}{2})$
such that the torsion of $H^0(\sh_{I,\bar s_v},P_\xi(\chi_v,h_0)(\frac{h_0}{2}))_{\mathfrak m}$
is non zero.

We then start with any $\lambda_0 q_v^{h_0/2} \in S_{\mathfrak m}(v)$ and
we want to prove that $S_{\mathfrak m}(v)$ contains a subset 
$\{ \lambda_1 q_v^{h_0/2},\lambda_0 q_v^{h_0/2-1},\cdots,\lambda_1 
q_v^{-h_0/2} \}$ such that
\begin{itemize}
\item there exists $0 < r < h_0+1$ with 
$\lambda_1 q_v^{h_0/2}=\lambda_0 q_v^{h_0/2+r}$,

\item and $\lambda_1 q_v^{h_0/2}$ is the modulo $l$ reduction of the eigenvalue 
of $\frob_v$ acting on $P_\xi(\chi'_v,h_0+1)(\frac{h_0}{2})$ such that the torsion of 
$H^0(\sh_{I,\bar s_v},P_\xi(\chi'_v,h_0)(\frac{h_0}{2}))_{\mathfrak m}$
is non zero.
\end{itemize}
We then obtain another interval inside $S_{\mathfrak m}(v)$ 
containing strictly the previous one and
we can then play again with $\lambda_1 q_v^{h_0}$ and repeat the above
property. At the end we then obtain the full set $\{ \lambda_0 q_v^n: n \in \Zm \}$
which is of order the order of $q_v$ modulo $l$. But this order is by hypothesis
strictly greater than $d$ although trivially the set $S_{\mathfrak m}(v)$ 
of eigenvalues of $\rho(\frob_v)$ is of order $\leq d$.

We now explain how to increase the interval as stated above. To do so
start first with a classical fact concerning $\overline \Zm_l[G]$-modules where
$G$ is a group. Let then $\Gamma$, $\Gamma_1$ and $\Gamma_2$ three 
$\overline \Zm_l$-free modules with an action of a group $G$ such that
\addtocounter{thm}{1}
\begin{equation}\label{eq-extension}
0 \rightarrow \Gamma_1 \longrightarrow \Gamma \longrightarrow \Gamma_2 
\rightarrow 0,
\end{equation}
which is $G$ equivariant. We then suppose that this extension is split
over $\overline \Qm_l$, i.e. 
$$\Gamma \otimes_{\overline \Zm_l} \overline \Qm_l
\simeq (\Gamma_1 \otimes_{\overline \Zm_l} \overline \Qm_l ) \oplus
(\Gamma_2 \otimes_{\overline \Zm_l} \overline \Qm_l).$$
Let then denote by 
$\Gamma'_2:=\Gamma \cap (\Gamma_2 \otimes_{\overline \Zm_l} \overline \Qm_l)$
and $\Gamma'_1:=\Gamma/\Gamma_2'$ so that
$$0 \rightarrow \Gamma'_2 \longrightarrow \Gamma \longrightarrow \Gamma'_1 
\rightarrow 0.$$
We then have the following commutative diagram
\addtocounter{thm}{1}
\begin{equation} \label{eq-prop-extension}
\xymatrix{
 & \Gamma_1 \ar@{^{(}->}[d] \ar@{=}[r] & \Gamma_1 \ar@{^{(}->}[d] \\
\Gamma'_2 \ar@{^{(}->}[r] \ar@{=}[d] & \Gamma \ar@{->>}[r] \ar@{->>}[d] & 
\Gamma'_1 \ar@{->>}[d]  \\ \Gamma'_2 \ar@{^{(}->}[r] & 
\Gamma_2 \ar@{->>}[r] & T,  \\
}
\end{equation}
where $T$ is of torsion and zero if and only if (\ref{eq-extension}) is split, i.e.
$\Gamma'_1=\Gamma_1$ and $\Gamma'_2=\Gamma_2$.

\noindent \textit{Important remark}:
in the following we will consider $\Tm_{I,\xi,\mathfrak m}[\gal_{F,S}]$-free modules
when $\Gamma_i \otimes_{\overline \Zm_l} \overline \Fm_l$ for $i=1,2$
are isotypic for the Galois action relatively to a character $\overline \chi_i$
such that $\overline \chi_1 \not \simeq \overline \chi_2$. From the
previous diagram, then $T=0$ and $\Gamma \simeq \Gamma_1 \oplus \Gamma_2$.

The idea is to consider two distincts filtrations 
$\Gamma:=H^0_{free}(\sh_{I,\bar s_v},\Psi_{\xi,\varrho})_{\mathfrak m}$,
and use the previous diagram to explain that, upon the hypothesis that the
torsion is non zero, we could not go through one to the other.
Recall first that
$$\Gamma \otimes_{\overline \Zm_l} \overline \Qm_l \simeq
\bigoplus_{\widetilde{\mathfrak m}} \Pi_{\widetilde{\mathfrak m}}^I \otimes 
\rho_{\widetilde{\mathfrak m}},$$
so that, by fixing any numbering 
$$\{ \widetilde{\mathfrak m} \subset \mathfrak m \hbox{ s.t. } 
\Pi_{\widetilde{\mathfrak m}}^I  \neq (0) \}=
\{ \widetilde{\mathfrak m}_1,\cdots, \widetilde{\mathfrak m}_r \},$$
we define a filtration $\Fil^\bullet(\Gamma)$ with graded parts
$\gr^k(\Gamma)$ which is a stable lattice of
$\Pi^I_{\widetilde{\mathfrak m}_k} \otimes \rho_{\widetilde{\mathfrak m}_k}$.
With previous notation, we suppose that $\Pi_{\widetilde{\mathfrak m}_r,v}
\simeq \st_{h_0+1}(\chi_v) \times \chi_{v,1} \times \cdots \times \chi_{v,d-h_0-1}$
with $\chi_v \in \cusp_{-1}(\varrho)$. As the modulo $l$ reduction of
$\rho_{\widetilde{\mathfrak m}_r}$ is irreducible, then 
$\gr^r(\Gamma)$ is typic, in the sense of \cite{scholze-LT} \S 5, i.e.
$$\gr^r(\Gamma) \simeq \Gamma_r \otimes \Lambda_r,$$
where $\Gamma_r$ is a stable lattice of $\Pi_{\widetilde{\mathfrak m}_r}$ on
which the Galois action is trivial and $\Lambda_r$ is a stable lattice of
$\rho_{\widetilde{\mathfrak m}_r}$.

We now start from the filtration of $\Gamma$ induced by the filtration
$\Fil^\bullet_!(\Psi_\varrho)$ and we try, using diagrams like above, to end up
to the previous lattice $\gr^r(\Gamma) \simeq \Gamma_r \otimes \Lambda_r$.
As explained in the previous section, $\Gamma_{\xi,\chi_v,\Psi,\mathfrak m}
(I,h_0+1,+)$ is a quotient of $\Gamma$ and so a quotient of $\gr^r(\Gamma)$
so that 
$$\Gamma_r \simeq \Gamma_{\xi,\chi_v,\Psi,\mathfrak m} (I,h_0+1,+).$$
At the opposite, we see 
$\Gamma_{\xi,\chi_v,\Psi,\mathfrak m} (I,h_0+1,-)$
as a quotient of 
$H^0_{free}(\sh_{I,\bar s_v},\Fil^1_!(\Psi_{\xi,\varrho}))_{\mathfrak m}$.
As noticed in the last remark of the previous section, 
$\Gamma_{\xi,\chi_v,\Psi,\mathfrak m} (I,h_0+1,-)$
and $\Gamma_{\xi,\chi_v,\Psi,\mathfrak m} (I,h_0+1,+)$
are not isomorphic.
We then deduced that there should exists $\chi'_v \in \cusp_{-1}(\varrho)$
and a diagram \ref{eq-prop-extension} with 
$\Gamma'_2=\Gamma_{\xi,\chi_v,\Psi,\mathfrak m} (I,h_0+1,-)$
and $\Gamma'_1$ associated to a sub-quotient of $H^0(\sh_{I,\bar s_v},
P_\xi(\chi'_v,t)(\frac{t-1}{2}-\delta))_{\mathfrak m}$. Note that
as this $P_\xi(\chi'_v,t)(\frac{t-1}{2}- \delta)$ is a sub-quotient of some
$\gr^k_!(\Psi_{\xi,\varrho)}$ with $k \geq 2$, then we must have 
$2 \leq t \leq h_0+1$ and $\delta>0$.

In particular, from the previous important remark, 
we also deduce that $\chi'_v(\frac{t-1}{2}-\delta) \equiv
\chi_v(\frac{h_0}{2}) \mod l$ so that, by denoting $\lambda_1=\chi'_v(\varpi_v)$,
we can write $\lambda_1 q_v^{h_0/2}=\lambda_0 q_v^{h_0/2+r}$
with $0 < r <h_0+1$.

To be able to play again with $\chi'_v$ in place of $\chi_v$, we just need to prove
that the $\mathfrak m$-localized
cohomology groups of $P_\xi(\chi'_v,h_0)$ with level $I$, have torsion in degree
$0$ and $1$. It is equivalent to look at the cohomology of 
$P_\xi(\chi'_v,h_0)(\frac{h_0+1-t+2\delta}{2})=P_x(\chi_{v,1},h_0)$
with $\chi_{v,1}:=\chi'_v (\frac{h_0+1-t+2\delta}{2})$ verifying 
$\chi_{v,1} \equiv \chi_v \mod l$.
Recall the well-known short exact sequence
$$0 \rightarrow H^i(X,P) \otimes_{\overline \Zm_l} \overline \Fm_l \longrightarrow
H^i(X,P \otimes_{\overline \Zm_l} \overline \Fm_l) \longrightarrow
H^{i+1}(X,P)[l] \rightarrow 0.$$
We apply it to $X=\sh_{I,\bar s_v}$ and $P=P_\xi(\chi_{v,1},h_0)$, and we
recall that its $\overline \Qm_l$-cohomology localized at $\mathfrak m$,
is concentrated in degree $0$.
The same is true for $P_\xi(\chi_v,h_0)$ while it has torsion in degree $0$ and $1$
such that its $\overline \Fm_l$-cohomology localized at $\mathfrak m$, is 
concentrated in degree $-1,0,1$.
The same is then true for the $\overline \Fm_l$-cohomology, 
localized at $\mathfrak m$, of $P_\xi(\chi_{v,1},h_0)$
so that its $\overline \Zm_l$-cohomology localized at $\mathfrak m$, must have 
torsion in degree $0$ and $1$.

\bibliographystyle{plain}
\bibliography{bib-ok}

\end{document}